\definecolor{myurlcolor}{rgb}{0.6,0,0}
\definecolor{mycitecolor}{rgb}{0,0,0.8}
\definecolor{myrefcolor}{rgb}{0,0,0.8}
\newtheorem{theorem}{Theorem}[section]
\newtheorem{lemma}[theorem]{Lemma}
\newtheorem{proposition}[theorem]{Proposition}
\theoremstyle{definition}
\newtheorem{definition}[theorem]{Definition}
\theoremstyle{remark}
\newcommand{\vast}{\bBigg@{3.35}}
\newcommand{\Vast}{\bBigg@{5}}
\newcommand*\bigcdot{\mathpalette\bigcdot@{.6}}
\newcommand*\bigcdot@[2]{\mathbin{\vcenter{\hbox{\scalebox{#2}{$\m@th#1\bullet$}}}}}
\newcommand{\FA}{\mathcal F_{\mathds A}}
\newcommand{\DA}{\Delta_{\mathds A}}
\newcommand{\Z}{{\mathds Z}}
\newcommand{\A}{{\mathds A}}
\newcommand{\ZA}{{\Z_\A}}
\newcommand{\e}{\mathrm e}
\renewcommand{\i}{i}%{(i)}
\newcommand{\M}{{\mathcal M}}
\renewcommand{\)}{\right)}
\begin{document}
%A Product Formula for Functional Integrals over Rational Adelic Paths
\title{On Infinitesimal Generators and Feynman-Kac Integrals of Adelic Diffusion}
\author{David Weisbart}\address{Department of Mathematics\\University of California, Riverside}\email{weisbart@math.ucr.edu} 

\maketitle

%%%%%%%%%%%%%%%%%%%%%%%%%%%%%%%%%%%%%%%%%%%%%
%%%%%%%%%%%%%%%%%%%%%%%%%%%%%%%%%%%%%%%%%%%%%
%%%%%%%%%%%%%%%%%%%%%%%%%%%%%%%%%%%%%%%%%%%%%
%%%%%%%%%%%%%%%%%%%%%%%%%%%%%%%%%%%%%%%%%%%%%
%%%%%%%%%%%%%%%%%%%%%%%%%%%%%%%%%%%%%%%%%%%%%

%A prime $p$, an exponent, and a diffusion constant together specify a $p$-adic diffusion equation and a measure on the Skorokhod space of $p$-adic valued paths.  The product, $P$, taken over the prime numbers of these measures with a fixed exponent is a probability measure on the product of the $p$-adic path spaces.  Bounds on the exit probabilities for $p$-adic paths imply that the adelic paths have full measure in the product space if and only if the sum, $\sigma$, of the diffusion constants is finite.  Finiteness of $\sigma$ implies that there is an adelic Vladimirov operator, $\Delta_\A$, and an associated diffusion equation whose fundamental solution gives rise to the measure induced by $P$ on an adelic Skorokhod space.  All moments of the random variable that counts the number of components of an adelic path that have journeyed outside of the ring of integers within a fixed time are finite.  Given a simple adelic potential $V$, we obtain a path integral representation for the dynamical semigroup associated to the adelic Schr\"{o}dinger operator $\Delta_\A + V$.

\begin{abstract} 
For each prime $p$, a Vladimirov operator with a positive exponent specifies a $p$-adic diffusion equation and a measure on the Skorokhod space of $p$-adic paths.  The product, $P$, of these measures with fixed exponent is a probability measure on the product of the $p$-adic path spaces.  The adelic paths have full measure if and only if the sum, $\sigma$, of the diffusion constants is finite.  Finiteness of $\sigma$ implies that there is an adelic Vladimirov operator, $\Delta_{\mathds A}$, and an associated diffusion equation whose fundamental solution gives rise to the measure induced by $P$ on an adelic Skorokhod space. For a wide class of potentials, the dynamical semigroups associated to adelic Schr\"{o}dinger operators with free part $\Delta_{\mathds A}$ have path integral representations.
\end{abstract}

%%%%%%%%%%%%%%%%%%%%%%%%%%%%%%%%%%%%%%%%%%%%%
%%%%%%%%%%%%%%%%%%%%%%%%%%%%%%%%%%%%%%%%%%%%%
%%%%%%%%%%%%%%%%%%%%%%%%%%%%%%%%%%%%%%%%%%%%%
%%%%%%%%%%%%%%%%%%%%%%%%%%%%%%%%%%%%%%%%%%%%%
%%%%%%%%%%%%%%%%%%%%%%%%%%%%%%%%%%%%%%%%%%%%%

\tableofcontents

\section{Introduction}\label{back}

Weyl's framework for quantum kinematical systems permits a natural generalization of such systems with real configuration spaces to the setting where the configuration space is a locally compact, Hausdorff abelian group \cite[Ch. III. $\S$ 16, Ch. IV. $\S$ 14, $\S$ 15]{Weyl}. Quantum systems with a $p$-adic or an adelic configuration space fit into Weyl's general framework.  Volovich proposed in \cite{vol} that the geometry of spacetime might be non-Archimedean at ultra small distance and time scales because of the failure of measurement at the Planck scale and the fact that the Archimedean axiom is fundamentally a statement about subdivision of physically measurable quantities.  In their seminal articles \cite{VV89a} and \cite{VV89b}, Vladimirov and Volovich initiated the study of quantum systems in the $p$-adic setting. In \cite{zel}, Zelenov studied Feynman integrals with $p$-adic valued paths and both Parisi in \cite{par} and Meurice in \cite{Meu} proposed certain functional integrals where time is $p$-adic. The last three decades have seen considerable interest in the study of non-Archimedean physics and Dragovich, Khrennikov, Kozyrev, and Volovich provide in \cite{dra:survey} an impressive survey of research in this area with an extensive list of references.

Vladimirov introduced in \cite{Vlad88} a pseudo-differential operator analogous to the classical Laplacian and acting on certain complex valued functions with domain in the $p$-adic numbers.  He investigated the spectral properties of this operator in \cite{Vlad90}.  Prior to this, both Taibleson in \cite{Taib} and Saloff-Coste in \cite{SC1} wrote about pseudo-differential operators in the context of local fields.  Saloff-Coste studied such operators in \cite{SC2} in the more general setting of local groups.  Kochubei gave in \cite{koch92} the fundamental solution to the $p$-adic analog of the diffusion equation, with the Vladimirov operator, discussed by Vladimirov in \cite{Vlad88, Vlad90}, replacing the Laplace operator. He furthermore developed a theory of $p$-adic diffusion equations and proved a Feynman-Kac formula for the operator semigroup with a $p$-adic Schr\"{o}dinger operator as its infinitesimal generator. The contemporary burgeoning study of $p$-adic quantum mechanics at least in part motivated the study of diffusion in the $p$-adic setting in \cite{alb}.  In this work, Albeverio and Karwowski constructed a continuous time random walk on $\mathds Q_p$, computed its transition semigroup and infinitesimal generator, and showed among other things that the associated Dirichlet form is of jump type.   Already in \cite{blair}, Blair studied diffusion in the context of the ring of rational adeles.  Torba and Z\'{u}\~{n}iga-Galindo further explored adelic diffusion in a recent article \cite{Zun:a}, introducing a metric structure under which the adeles become a complete metric space.  They used this metric structure to define pseudo-differential operators and an associated family of parabolic type equations whose fundamental solutions give rise to the transition functions of a Markov semigroup.  Seeking to better understand the adelic Feynman integrals discussed by Djordjevi\'{c}, Dragovich, and Ne\v{s}i\'{c} in \cite{DDN} and Dragovich and Raki\'{c} in \cite{DR}, we investigate the adelic path integral in the probabilistic setting.  We will follow the approach of \cite{var97} for the construction of adelic diffusion.  While our operator appears to be different from the one Torba and Z\'{u}\~{n}iga-Galindo present in \cite{Zun:a}, we answer at least in the present context their call to study adelic Schr\"{o}dinger equations together with associated Feynman and Feynman-Kac integrals.

The path integral formulation of quantum mechanics due to Feynman involves the concept of a complex valued measure on an infinite dimensional path space \cite{feyn, lap}.  However, Cameron showed in \cite{cam} that there are no such measures.  The probability measures on path spaces that appear in the setting of diffusion are deeply linked to the desired measures in the quantum framework and are potentially useful for making mathematically precise some of the heuristics of the quantum setting.  The mathematical theory of diffusion plays a central role in probability theory and is an important subject of study for its intrinsic interest and for its many applications.  While the significance of adelic diffusion to the foundation of quantum theory is a primary motivator of the present study, the results of the present study should have applications to more general analytical problems in the adelic setting.  Torba and Z\'{u}\~{n}iga-Galindo point out in \cite{Zun:a} some examples of research that motivate further study of pseudo-differential equations on the ring of rational adeles, namely, Haran's discovery in \cite{Haran} of a connection between explicit formulas for the Riemann zeta function and adelic pseudo-differential operators and Connes' study of the Riemann zeta function in \cite{Connes}.

Denote henceforth by $\mathds Q_p$ the $p$-adic numbers.  Section~\ref{two} reviews basic properties of $\mathds Q_{p}$, the ring of adeles, and measures on path spaces.  While we refer to \cite{DWexit} for most of this background, we discuss some additional required background and establish further notational conventions.  Suppose that $(p_i)$ is the strictly increasing enumeration of the prime numbers.  Suppose that $I$ is a time interval, a closed but possibly unbounded subinterval of $\mathds R$ whose minimum element is 0.  Suppose that $D(I\colon \mathds Q_{p_i})$ is the Skorokhod space of paths valued in $\mathds Q_{p_i}$.  Suppose that $(D(I\colon \mathds Q_{p_i}), P^i)$ is the probability space discussed in \cite{DWexit} for a diffusion process that is valued in $\mathds Q_{p_i}$, that gives full measure to paths starting at 0, and that corresponds to a Vladimirov operator with exponent $b$ and diffusion constant $\sigma_i$.  Parameters $I$ and $b$ are not permitted to vary in the index $i$.  For each $x_i$ in $\mathds Q_{p_i}$, the measure $P^i_{x_i}$ gives full measure to paths starting at $x_i$ and agrees with $P^i$ up to a translation by $x_i$.

Suppose that $x$ is in the ring of adeles and that $x_i$ is the $i^{\rm th}$ component of $x$.  Denote by $D(I)$ the infinite product of $D(I\colon \mathds Q_{p_i})$ and by $P_x$ the product measure $\otimes_{i}P^{\i}_{x_i}$. Suppose that $X^{\i}$ is the stochastic process that maps each $t$ in $I$ to the random variable $X^{\i}_t$ that acts on paths $\omega_i$ in $(D(I\colon \mathds Q_{p_i}), P^i)$ by \[X^{\i}_t(\omega_i) = \omega_i(t).\] Define by $Y$ the stochastic process that maps each $t$ in $I$ to a random variable acting on a path $\omega$ in the probability space $(D(I), P)$ by \[Y_t(\omega) = (X^{\i}_t(\omega_i)) = (\omega_i(t)) \quad {\rm where}\quad \omega = (\omega_i).\] The exit probabilities of \cite{DWexit} lead to the main result of Section~\ref{three}, that the stochastic process $Y$ has a version in the Skorokhod space of paths valued in the adeles if and only if the sequence $(\sigma_i)$ is summable.   Denote by $N_T$ the random variable that counts the number of $p_i$-adic components in which a sample path of $Y$ that originates at 0 wanders outside of the ring of $p_i$-adic integers by time $T$, where $T$ is in $I$.  Section~\ref{three} additionally proves that $N_T$ has finite moments of all orders.  Section~\ref{four} proves that the summability condition also implies that the adelic versions of the processes come from adelic diffusion equations.  Furthermore, the measure on the adelic Brownian bridges agrees with the product measure whose factors are measures on Brownian bridges given by an adelic conditioning. Section \ref{five} presents a Feynman-Kac formula for the dynamical semigroup associated to an adelic Schr\"{o}dinger operator.  If the potential is a bounded simple adelic potential, then the adelic Feynman-Kac integral is a product of integrals over each component.  In this case, the dynamical semigroup is a semigroup of integral operators whose kernel is a product of kernels in each component, as described in the Feynman setting in \cite{DDN}.

The current work should find immediate application in several areas.  Schwinger studied the foundations of quantum mechanics in a series of papers \cite{Sch1} and gave a systematic account of these foundational ideas in \cite{Sch2}.  Among these papers, Schwinger studied the finite dimensional approximation of quantum mechanical systems by working with finite configuration spaces in \cite{Sch3}.  Digernes, Hustad, and Varadarajan explored more generally the finite approximation of Weyl systems in \cite{DHV}.  Digernes, Varadarajan, and Varadhan in \cite{DVV} extended these kinematical approximations in the setting of a real configuration space and its grid approximations to the dynamical setting that Schwinger considered.  Varadarajan discussed in \cite{VSV-SW} some generalizations of the quantum systems of Weyl and Schwinger in the setting of locally compact abelian groups.  Albeverio, Gordon, and Khrennikov studied finite approximation of quantum systems in locally compact abelian groups in \cite{agk}, obtaining some of the results of \cite{DVV} in this very general setting.  In \cite{BDW}, Bakken, Digernes, and Weisbart improved upon some of the results of \cite{agk}, but in the restricted setting of configuration spaces that are local fields.  The work on finite approximation of quantum systems is ongoing with many remaining open questions.  The results of the current paper should make it possible to extend many previous results about finite approximation in the real and $p$-adic settings to the adelic setting.  Ultrametricity arises in the theory of complex systems and many references cited by \cite[Chapter~4]{KKZ} may be extended to an adelic setting, for example, the works \cite{ave2, ave3, ave4, ave} of Avetisov, Bikulov, Kozyrev, and Osipov dealing with $p$-adic models for complex systems.

%%%%%%%%%%%%%%%%%%%%%%%%%%%%%%%%%%%%%%%%%%%%%
%%%%%%%%%%%%%%%%%%%%%%%%%%%%%%%%%%%%%%%%%%%%%
%%%%%%%%%%%%%%%%%%%%%%%%%%%%%%%%%%%%%%%%%%%%%
%%%%%%%%%%%%%%%%%%%%%%%%%%%%%%%%%%%%%%%%%%%%%
%%%%%%%%%%%%%%%%%%%%%%%%%%%%%%%%%%%%%%%%%%%%%

\section{Background}\label{two}

We follow closely the notation and conventions of \cite{DWexit}, repeating here only that which seems necessary because of some minor changes in notation required by the fact that we must deal with all primes rather than a single fixed prime.  Refer to \cite{DWexit} for a discussion of path spaces and probability measures on path space.

\subsection{Basic Facts about $\mathds Q_p$ and $\mathds A$}\label{basic1}

For each natural number $i$, denote by $\mathds Q_{p_i}$ the field of $p_i$-adic numbers with the absolute value denoted by $|\cdot|_i$.  Denote respectively by $B^i_k(x)$ and $S^i_k(x)$ the ball and the circle of radii $p_i^k$, the compact open sets \[{B}^i_k(x) = \{y\in \mathds Q_{p_i}\colon |y-x|_i \leq p_i^k\} \quad {\rm and}\quad {S}^i_k(x) = \{y\in \mathds Q_{p_i}\colon |y-x|_i = p_{i}^k\}.\]  Denote by $\mathds Z_{p_i}$ the \emph{ring of integers}, the unit ball in $\mathds Q_{p_i}$, and by $\mu_i$ the Haar measure on $\mathds Q_{p_i}$ that is normalized to be one on $\mathds Z_{p_i}$.  Denote by $\mathds S^1$ the unit complex numbers and by $\chi_i$ the rank 0 character \[\chi_i\colon \mathds Q_{p_i} \to \mathds S^1\] that is simply denoted by $\chi$ in \cite{DWexit} for a fixed prime $p$.  Denote by $\mathcal F_i$ the Fourier transform $L^2(\mathds Q_{p_i})$ and by $\mathcal F_i^{-1}$ its inverse. Denote by $SB(\mathds Q_{p_i})$ the \emph{Schwartz-Bruhat} space of complex valued, compactly supported, locally constant functions on $\mathds Q_{p_i}$.

\begin{definition}
Elements of an infinite set $I$ are \emph{almost always} in a set $J$ if all but finitely elements of $I$ are in $J$.  A property holds for \emph{almost all} $i$ in an infinite set $I$ if the property holds for all but finitely many elements of $I$.
\end{definition}

The ring of  \emph{rational adeles}, denoted by $\mathds A$, is the set %
\begin{align*}\mathds A = \left\{(a_1, a_2, \dots)\colon \forall i\in \mathds N,\; a_i \in \mathds Q_{p_i} \text{ and for almost all } i\in \mathds N,\; a_i\in \mathds Z_{p_i}\right\}.\end{align*}  %
This is to say that the ring $\A$ is the restricted product of the $\mathds Q_{p_i}$ with respect to the subrings $\mathds Z_{p_i}$.  The ring of  \emph{completed rational adeles} has an additional component, a first component $a_\infty$ which is in the real numbers.  While the main results of this paper will hold over the ring of completed rational adeles as can be verified by the reader, the current work deals exclusively with the ring of rational adeles and refers to them henceforth as the \emph{adeles}.  The topology on the adeles will be the restricted product topology, namely, the topology generated by sets that are products of balls in each $p_i$-adic component and that are component-wise almost always equal to $\mathds Z_{p_i}$. The following proposition and proof are well known, but included for the readers' convenience.

\begin{proposition}\label{polish}
A Hausdorff topological space that is second countable and locally compact is completely metrizable.
\end{proposition}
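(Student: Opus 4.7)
The plan is to realize $X$ as an open subset of a compact Polish space, namely its one-point compactification, and then invoke Alexandrov's theorem on $G_\delta$ subsets of Polish spaces. Concretely, let $X^* = X \cup \{\infty\}$ be the one-point compactification of $X$. Because $X$ is locally compact and Hausdorff, standard arguments show $X^*$ is compact and Hausdorff, and $X$ sits inside $X^*$ as an open subset (equivalently, $\{\infty\}$ is closed).

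The first real step is to promote second countability from $X$ to $X^*$. Since $X$ is locally compact, Hausdorff, and second countable, it is $\sigma$-compact: one can refine any countable base to a countable base of relatively compact open sets, and the countable union of their closures exhausts $X$. Writing $X = \bigcup_{n} K_n$ with each $K_n$ compact and $K_n \subseteq \operatorname{int}(K_{n+1})$, the sets $X^* \setminus K_n$ form a countable neighborhood base at $\infty$. Combined with a countable base for the open subspace $X$, this yields a countable base for $X^*$.

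Now $X^*$ is a compact Hausdorff space (so in particular regular) that is second countable, so Urysohn's metrization theorem applies and $X^*$ is metrizable. Any metric compatible with the compact topology of $X^*$ is automatically complete, since compact metric spaces are complete. Hence $X^*$ is a compact Polish space. Finally, since $X$ is open in $X^*$, Alexandrov's theorem (every open, or more generally $G_\delta$, subset of a completely metrizable space is completely metrizable, with an explicit metric built from $d$ and the distance to the complement) gives that $X$ itself is completely metrizable.

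The only subtle step is the passage to second countability of $X^*$: one must use local compactness plus second countability of $X$ to produce a countable neighborhood base at $\infty$, which is exactly where $\sigma$-compactness enters. Once that is in hand, Urysohn handles metrizability, compactness yields completeness on $X^*$, and Alexandrov transfers complete metrizability to the open subspace $X$.
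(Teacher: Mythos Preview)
Your proof is correct and follows essentially the same route as the paper: pass to the one-point compactification, observe it is compact Hausdorff and second countable, apply Urysohn to metrize it, use compactness to get completeness, and then transfer complete metrizability to the open subspace $X$. You supply more detail than the paper does---in particular you spell out the $\sigma$-compactness argument for second countability at $\infty$ and name Alexandrov's theorem explicitly---but the strategy is identical.
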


\begin{proof}
Suppose that $X$ is locally compact, Hausdorff, and second countable and denote by $Y$ the one point compactification of $X$.  The topological space $Y$ is compact and Hausdorff, hence regular.  The second countability of $X$ implies that $Y$ is second countable and so the Urysohn metrization theorem implies that $Y$ is metrizable.  The compactness of $Y$ implies that it is complete in any metric that gives rise to its topology and so $X$ is completely metrizable as an open subset of $Y$.
\end{proof}

Since $\A$ with the restricted product topology is a second countable, locally compact, Haudorff topological space, Proposition~\ref{polish} implies that $\A$ is a Polish space.  Note that \cite{Zun:a} constructs a complete metric on the ring of completed rational adeles that will induce a complete metric on $\A$, however, the current study requires no such specification of a metric.  While in their discussion the metric plays a critical role in the construction of an adelic pseudo-Laplace operator, our adelic pseudo-Laplace operator will be different.

Define by $\Z_{\A}$ the set of all elements $\left(a_i\right)$ of $\A$ such that $a_i$ is in $\Z_{p_i}$ for every natural number $i$.  Under the usual component-wise addition and multiplication, the ring of adeles forms a ring and $\Z_{\A}$ is an open, compact subring.  As a locally compact abelian group under its addition operation, the ring of adeles has a Haar measure $\mu_{\A}$ that is unique on fixing a normalization.  Normalize $\mu_{\A}(\ZA)$ to be equal to 1.  The measure $\mu_\A$ on $\A$ is the infinite product of the measures $\mu_i$ taken over the index $i$.

For each natural number $i$, denote by $\mathcal H_i$ the Hilbert space $L^2\big(\mathds Q_{p_i}, \mu_i\big)$ and specify the vacuum vector $\Omega_i$ of $\mathcal H_i$ to be the characteristic function on $\mathds Z_{p_i}$.  Define by $\mathcal H_{\rm alg}$ the set \[\mathcal H_{\rm alg} = \underset{i\in\mathds N}{\overset{\bigcdot}{\bigotimes}}\, \mathcal H_{i},\] the infinite algebraic tensor product whose $i^{\rm th}$ component is an element of $\mathcal H_{p_i}$ and whose components are almost always vacuum vectors.  An element $f$ of $\mathcal H_{\rm alg}$ is \emph{simple} if for every natural number $i$ there is a function $f_i$ in $\mathcal H_{i}$ with the property that \[f = f_1\otimes f_2 \otimes \cdots\] and $f_i$ is equal to a vacuum vector for almost all $i$.  Arbitrary elements of $\mathcal H_{\rm alg}$ are finite linear combinations of simple functions.  For each simple $f$ and $g$ in $\mathcal H_{\rm alg}$, define the inner product $\langle f, g\rangle_{\mathds A}$ and the norm $||f||_{\mathds A}$ by \[\langle f, g\rangle_{\mathds A} = \prod_{i\in\mathds N}\langle f_i, g_i\rangle_{i} \quad {\rm and} \quad ||f||_{\mathds A} = \prod_{i\in\mathds N}||f_i||_{i},\] extending these to arbitrary elements of $\mathcal H_{\rm alg}$ by linearity.  The product $\langle\cdot, \cdot\rangle_{\mathds A}$ is an inner product on $\mathcal H_{\rm alg}$ and $||\cdot||_{\mathds A}$ is its associated norm.  The restricted tensor product, $\mathcal H$, of the $\mathcal H_{i}$ with respect to the specified vacuum vectors is the analytic completion under $||\cdot||_{\mathds A}$ of the above infinite algebraic tensor product. It is in this sense that \[\mathcal H = \bigotimes_{i\in\mathds N}\mathcal H_{i}.\]

Identify the Hilbert space $L^2(\A, \mu_\A)$ of square integrable functions on $\A$ with $\mathcal H$ and view any operator on $L^2(\A, \mu_\A)$ equivalently as an operator on $\mathcal H$.  An element $f$ of $\mathcal H$ is a \emph{simple adelic Schwartz-Bruhat function} if it is a simple element of $\mathcal H_{\rm alg}$ and the component functions of its product representation are Schwartz-Bruhat functions. The adelic Schwartz-Bruhat space, $SB({\mathds A})$, is the set of finite sums of simple adelic Schwartz-Bruhat functions and is a dense subset of $\mathcal H$.  A locally compact, Hausdorff, abelian group, the set of rational adeles is equipped with a Fourier transform.  For each simple function $f$ in $SB({\mathds A})$ and for each $y$ in $\A$ with $y$ equal to $(y_i)$, define the Fourier transform $\FA$ on $f$ by \[(\mathcal F_{\A}f)(y) = \prod_{i\in\mathds N}\big(\mathcal F_if_i\big)(y_i)\] and extend it to act on all of $SB({\mathds A})$ by linearity.   As a unitary operator on the densely defined subspace $SB({\mathds A})$ of $\mathcal H$, the operator $\FA$ extends to a unitary operator on all of $\mathcal H$.  Define similarly the inverse $\FA^{-1}$ of $\FA$.

\subsection{Diffusion in the $\mathds Q_p$ Setting}\label{basic2}

Fix a positive real number $b$.  For each natural number $i$, the multiplication operator ${\mathcal M}_i$ acts on $SB(\mathds Q_{p_i})$ by \[({\mathcal M}_if)(x) = |x|_i^bf(x).\]  Denote by $\Delta_i$ the unique self adjoint extension of the pseudo Laplace operator $\tilde{\Delta}_i$ with exponent $b$ that acts on $SB(\mathds Q_{p_i})$ by \begin{equation*}\big(\tilde{\Delta}_i f\big)(x) = \big(\mathcal F_i^{-1}{\mathcal M}_i\mathcal F_if\big)\!(x).\end{equation*} Extend this operator to the \emph{Vladimirov operator with exponent} $b$ that is denoted $\hat{\Delta}_i$ and that acts on functions $f$ on $\mathds R_+\times \mathds Q_{p_i}$ that for each positive $t$ are in the domain of $\Delta_i$ by \begin{equation*}(\hat{\Delta}_if)(t,x) = (\Delta_if(t,\cdot))(x).\end{equation*}  Denote ambiguously by $\Delta_i$ the operator $\hat{\Delta}_i$. For each natural number $i$, fix $\sigma_i$ to be a positive real number and refer to it as a diffusion constant.  The pseudo differential equation \begin{equation}\label{eq1} \dfrac{{\rm d}f(t,x)}{{\rm d}t} = -\sigma_i\Delta_i f(t,x)\end{equation} has as its fundamental solution the function \begin{equation}\rho^i(t,x) = \left(\mathcal F_i^{-1}e^{-\sigma_it|\cdot|_i^b}\right)\!(x).\end{equation}  %

A minor modification of the more general arguments of \cite{var97} show that if $I$ is any time interval, then for each positive $t$ in $I$ the function $\rho^i(t,\cdot)$ is a probability density function and that $\rho^i$ gives rise to a probability measure $P^i$ on $D(I\colon \mathds Q_{p_i})$ that is concentrated on the set of paths originating at 0.  A history, $h$, for paths in $D(I\colon \mathds Q_{p_i})$ is a finite sequence of the form \[h = ((0,U_0), (t_1, U_1), \dots, (t_n, U_n)),\] where $n$ is a natural number, $(t_i)$ is a strictly increasing finite sequence in $I$, and $(U_i)$ is a finite sequence of Borel subsets of $\mathds Q_{p_i}$.  The sequence $(t_k)_{k=1}^n$ is an \emph{epoch} and the sequence $(U_k)_{k=0}^n$ is a \emph{route}.  The natural number $n$ is the \emph{length} of the epoch.  Denote by $C(h)$ the subset of $D(I\colon \mathds Q_{p_i})$ given by \[C(h) = \big\{\omega \in D(I\colon \mathds Q_{p_i})\colon \omega(0) \in U_0\; {\rm and}\; \forall j \in \{1, \dots, n\},\;\omega(t_j)\in U_j\big\}.\] Sets of the form $C(h)$ for some history $h$ are \emph{simple cylinder sets}.  The \emph{set of cylinder sets} is the $\sigma$-algebra generated by the simple cylinder sets.  If $U_0$ is the set $\{0\}$, then define $P^i(C(h))$ by \begin{align}\label{meas}P^i(C(h)) &= \int_{U_1} \cdots \int_{U_n} \rho^i(t_1, x_1) \rho^i(t_2 - t_1, x_2 - x_1)\\&\qquad\qquad\qquad\qquad\cdots \rho^i(t_n - t_{n-1}, x_n - x_{n-1}) \,{\rm d}\mu_i\!\(x_n\)\cdots \,{\rm d}\mu_i\!\(x_1\).\notag\end{align}  If 0 is not in $U_0$, then $P^i(C(h))$ is 0.  This premeasure on the $\pi$-system of simple cylinder sets extends to a probability measure on the cylinder sets of $D(I \colon \mathds Q_{p_i})$ that we will once again and henceforth denote by $P^i$.  For further discussion, see \cite{bw}. Define the stochastic process $X^i$ on the probability space $(D(I \colon \mathds Q_{p_i}), P^i)$ to be the function \begin{align}\label{pProc}X^i \colon I\times D(I\colon \mathds Q_{p_i}) \to \mathds Q_{p_i} \quad {\rm by}\quad (t, \omega) \mapsto X^i_t(\omega) = \omega(t).\end{align}

For each time interval $I$, the probability measure $P^i$ on $D(I \colon \mathds Q_{p_i})$ gives full measure to paths originating at 0.   For each $x_i$ in $\mathds Q_{p_i}$, define $P^i_{x_i}$ to be the probability measure given by the same density function that defines $P^i$ but conditioned to give full measure to the paths originating at $x_i$.  If $A$ is a simple cylinder set, then \[P^i_{x_i}(A) = P^i(A - x_i).\] For each $y_i$ in $\mathds Q_{p_i}$ and each positive $T$ in $I$, the arguments of \cite{var97} guarantee the existence of the probability measures concentrated on the $p_i$-adic Brownian bridges, namely, the measures $P^i_{T, x_i, y_i}$ which are given by the measures $P^i_{x_i}$ conditioned so that paths almost surely take value $y_i$ at time $T$.  These conditioned measures form a continuous family of probability measures depending on the starting and ending points, as discussed in \cite{var97} in a more general setting but with the diffusion constant $\sigma_i$ restricted to be equal to 1.  There is no obstruction to allowing for a more general diffusion constant, which \cite{bw} discusses.

%%%%%%%%%%%%%%%%%%%%%%%%%%%%%%%%%%%%%%%%%%%%%
%%%%%%%%%%%%%%%%%%%%%%%%%%%%%%%%%%%%%%%%%%%%%
%%%%%%%%%%%%%%%%%%%%%%%%%%%%%%%%%%%%%%%%%%%%%
%%%%%%%%%%%%%%%%%%%%%%%%%%%%%%%%%%%%%%%%%%%%%
%%%%%%%%%%%%%%%%%%%%%%%%%%%%%%%%%%%%%%%%%%%%%

\section{Products of $p$-Adic Path Spaces}\label{three}

\subsection{Exit Time Probabilities}

Suppress the index $i$ in fixing a prime $p_i$ in this subsection.  Following \cite{DWexit}, denote by $||X||_T$ the value \[||X||_T=\sup_{0\leq t\leq T}|X_t|\] and by $\alpha$ the quantity \begin{align}\label{alpha}\alpha = 1 - \dfrac{p^b-1}{p^{b+1}-1}.\end{align} Let $r$ be an integer.  Theorem~3.1 of \cite{DWexit} states that for any non-negative real number $T$, \[P\left(||X||_T\leq p^r\right) = {\rm e}^{-\sigma\alpha Tp^{-rb}}.\] We specialize this result in the following proposition.

\begin{proposition}\label{three:ExitEqual}
For any non-negative real number $T$, \[P\left(||X||_T\leq 1\right) = {\rm e}^{-\sigma\alpha T}.\] 
\end{proposition}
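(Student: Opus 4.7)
The plan is simply to invoke Theorem~3.1 of \cite{DWexit}, quoted in the preceding paragraph, with the specific choice $r = 0$. That theorem asserts
\[
P\!\left(\|X\|_T \leq p^r\right) = {\rm e}^{-\sigma \alpha T p^{-rb}}
\]
for every integer $r$ and every non-negative $T$. Substituting $r = 0$ makes $p^{r} = 1$ on the left-hand side and $p^{-rb} = 1$ in the exponent on the right-hand side, which yields exactly $P(\|X\|_T \leq 1) = {\rm e}^{-\sigma \alpha T}$.

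There is no real obstacle here, since this proposition is a direct specialization of a previously established result; the only things worth flagging are that the integer $r=0$ is admissible in the hypotheses of Theorem~3.1, and that the ball of radius $p^0 = 1$ in $\mathds Q_p$ is precisely the ring of integers $\mathds Z_p$. The latter observation is useful in anticipation of Section~\ref{three}'s treatment of $N_T$, since $\{\|X\|_T \leq 1\}$ is the event that the sample path remains in $\mathds Z_p$ throughout $[0,T]$, and $e^{-\sigma \alpha T}$ will therefore serve as the per-component probability that $Y$ has not wandered outside $\mathds Z_{p_i}$ by time $T$. Accordingly, I would present the proof in a single short line that states ``apply Theorem~3.1 of \cite{DWexit} with $r = 0$'' and leave it at that.
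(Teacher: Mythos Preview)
Your proposal is correct and matches the paper's approach exactly: the paper states the proposition without proof, noting only that it is a specialization of Theorem~3.1 of \cite{DWexit}, which is precisely your argument with $r=0$.
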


Suppose that $x$ is in $\mathds Q_p$ and $y$ is in $x+p^{-r}\mathds Z_p$.  Theorem~4.7 of \cite{DWexit} states that for all $t$ in $(0,T]$, \[P_{t, x, y}\big(||X-x||_T\leq p^r\big) \ge P_x(||X-x||_T\leq p^r).\] We specialize this result in the following proposition.

\begin{proposition}\label{three:ExitCondInequal}
Suppose that $x$ is in $\mathds Q_p$ and $y$ is in $x+\mathds Z_p$.  For all $t$ in $(0,T]$, \[P_{t, x, y}\big(||X-x||_T\leq 1\big) \ge P_x(||X-x||_T\leq 1).\] 
\end{proposition}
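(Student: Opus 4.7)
The plan is to derive this inequality as the specialization at $r = 0$ of Theorem~4.7 of \cite{DWexit}, in direct parallel with the way Proposition~\ref{three:ExitEqual} above is obtained from Theorem~3.1 of \cite{DWexit}. The cited theorem asserts that if $x$ is in $\mathds Q_p$, $r$ is an integer, and $y$ is in $x + p^{-r}\mathds Z_p$, then $P_{t, x, y}\big(||X-x||_T\leq p^r\big) \ge P_x(||X-x||_T\leq p^r)$ for every $t$ in $(0, T]$. Substituting $r = 0$ yields $p^r = 1$ and $x + p^{-r}\mathds Z_p = x + \mathds Z_p$, which reproduces both the hypothesis and the conclusion of the proposition verbatim. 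No translation argument is needed, since the inequality in \cite{DWexit} is stated directly for arbitrary starting point $x$, and no manipulation of the bound $p^r$ is needed beyond evaluating $p^0$.

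There is no real obstacle to anticipate, because the analytic heavy lifting has already been carried out in \cite{DWexit}; the entirety of the proof is the one-line substitution. The reason for recording the $r = 0$ case as a separate statement is structural: Section~\ref{three} will repeatedly compare probabilities of excursions of each $p_i$-adic component of an adelic path outside of the local ring of integers $\mathds Z_{p_i}$, and this inequality will be invoked componentwise to bound conditional probabilities against their unconditional counterparts. Isolating the unit-ball normalization now avoids cluttering the later summability and moment arguments with the parameter $r$, and pairs cleanly with Proposition~\ref{three:ExitEqual}, which supplies the corresponding exact value of the unconditional exit probability.
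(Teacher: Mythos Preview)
Your proposal is correct and matches the paper exactly: the paper does not supply an independent proof but simply records Proposition~\ref{three:ExitCondInequal} as the $r=0$ specialization of Theorem~4.7 of \cite{DWexit}, just as you describe.
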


\subsection{The Probability of the Adelic Path Subspace}
Fix henceforth a time interval $I$, an exponent $b$, and a sequence $(\sigma_i)$ of diffusion constants.  The time interval and exponent are constant in the index $i$.  Denote by $D(I)$ the product space \[D(I) = \prod_{i\in\mathds N} D(I\colon \mathds Q_{p_i}).\] Denote by $D(I\colon \mathds A)$ the set of Skorokhod paths valued in the adeles. The natural inclusion map permits an identification of $D(I\colon \mathds A)$ with the subset of $D(I)$ whose components are almost always in $\mathds Z_{p_i}$ for the respective index $i$.  Define by $P^{i}$ the measure on $D(I\colon \mathds Q_{p_i})$ given by \eqref{meas} and by $P$ the product measure $\otimes_{i}P^i$ on $D(I)$.  Denote by $\sigma$ the possibly infinite quantity \[\sigma = \sum_{i\in \mathds N} \sigma_i.\]

\begin{theorem}\label{DAfullMeas}
If $\sigma$ is finite, then $D(I\colon \mathds A)$ has full measure in $D(I)$. 
\end{theorem}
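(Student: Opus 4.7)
The plan is to apply the first Borel-Cantelli lemma to the sequence of events
\[B_i = \bigl\{\omega \in D(I) : \omega_i(t) \notin \mathds Z_{p_i} \text{ for some } t \in I\bigr\},\]
considered in the product probability space $(D(I), P)$. Under the identification noted in the text, $D(I:\mathds A)$ coincides with the set of $\omega$ for which $B_i$ occurs for only finitely many $i$, so it suffices to establish $P(\limsup_i B_i) = 0$. Each $B_i$ depends only on the $i$-th coordinate, so $P(B_i) = P^i(B_i)$, and its measurability is ensured by the cadlag property, which allows one to replace the uncountable condition defining $B_i$ by the countable condition that $|\omega_i(t)|_i > 1$ for some rational $t$ in $I$.

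For bounded $I = [0, T]$, Proposition~\ref{three:ExitEqual} applied with diffusion constant $\sigma_i$ gives $P^i(B_i) = 1 - e^{-\sigma_i \alpha_i T}$, where $\alpha_i$ is defined by~\eqref{alpha} with $p$ replaced by $p_i$. A direct estimate of that rational function shows $0 < \alpha_i < 1$ for every prime, so the elementary bound $1 - e^{-x} \leq x$ yields $P(B_i) \leq \sigma_i \alpha_i T \leq \sigma_i T$. Summing over $i$ then gives $\sum_i P(B_i) \leq T \sigma < \infty$ by the hypothesis on $\sigma$, and the first Borel-Cantelli lemma delivers the conclusion in this case.

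For the unbounded case $I = [0, \infty)$, I would express $D(I:\mathds A)$ as the countable intersection $\bigcap_{n \in \mathds N} F_n$, where $F_n$ consists of those $\omega \in D(I)$ whose restriction to $[0,n]$ represents an element of $D([0,n]:\mathds A)$. Each $F_n$ has full $P$-measure by the bounded case, so their intersection does as well. The main obstacle throughout is justifying the identification of $D(I:\mathds A)$ with the stated subset of $D(I)$; on each bounded subinterval this should rest on the precompactness of the image of a cadlag path in the Polish space $\mathds A$ together with the fact that compact subsets of $\mathds A$ are contained in products $\prod_i K_i$ with $K_i \subseteq \mathds Z_{p_i}$ for almost every $i$. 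Once that identification is in hand, all remaining steps are standard.
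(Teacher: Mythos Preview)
Your proof is correct and follows essentially the same route as the paper's: both arguments use Proposition~\ref{three:ExitEqual} to control the probability that the $i^{\rm th}$ component leaves $\mathds Z_{p_i}$ by time $T$, both show that almost surely only finitely many components do so, and both pass to the unbounded interval by intersecting over integer endpoints. The only packaging difference is that you invoke the first Borel--Cantelli lemma on $\sum_i P(B_i)\leq T\sigma<\infty$, whereas the paper exploits independence to compute $P\big(\bigcap_{i\geq M}B_i^c\big)=\prod_{i\geq M}\e^{-\sigma_i\alpha_i T}\geq \e^{-\sigma_M T}$ directly and then lets $M\to\infty$; your version is marginally more robust in that it does not use the product structure, while the paper's yields the exact tail probability rather than a bound. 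Your explicit attention to why a c\`adl\`ag adelic path on a bounded interval has precompact range (and hence lies in $\prod_i K_i$ with $K_i\subset\mathds Z_{p_i}$ for almost all $i$) is a point the paper takes for granted.
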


\begin{proof}
For any positive real number $T$ in $I$, denote by $D_M^T$ and $\sigma_M$ respectively the set and the finite sum given by \[D_M^T = \Big\{\omega\in D(I)\colon \sup_{0\leq t\leq T}|\omega_p(t))|\leq 1, \;\forall i \ge M\Big\} \quad {\rm and} \quad \sigma_M = \sum_{i\ge M}\sigma_i.\] Since $\alpha_i$ is in $(0,1)$ for each $i$, Proposition~\ref{three:ExitEqual} together with the algebraic properties of the exponential imply that \[P\big(D_M^T\big) \ge e^{-\sigma_MT}.\]  Under the assumption that $\sigma$ is finite, $\sigma_M$ tends to 0 as $M$ tends to infinity and so \[\lim_{M\to \infty} P\big(D_M^T\big) = 1.\] Denote by $D^T$ the set given by \[D^T = \bigcup_{M\in\mathds N} D_M^T.\]  Continuity from below of the measure $P$ implies that $P\big(D^T\big)$ is equal to 1.  If $I$ is a bounded interval $[0, S]$, then take $T$ to equal $S$ so that $D(I\colon \mathds A)$ is equal to $D^T$ and so $P(D(I\colon \mathds A))$ is equal to 1. Otherwise, $I$ is the interval $[0,\infty)$ and so \[D(I\colon \mathds A) = \bigcap_{T\in \mathds N}D^T,\] hence $P(D(I\colon \mathds A))$ is equal to 1.
\end{proof}

\subsection{Moments of the $\ZA$ First Exit Number}

Denote by $Y$ the stochastic process that maps each positive $t$ in $I$ to the random variable $Y_t$ acting on the probability space $\(D(I),P\)$ by \[Y_t(\omega) = \omega(t).\] For each positive $T$ in $I$, define the random variable $N_T^{\i}$ on the probability space $\(D(I),P\)$ in the following way.  If $\omega$ is in $D(I)$, then $N_T^{\i}(\omega)$ is 0 if the $i^{\rm th}$ component of $\omega$ remains in $\mathds Z_{p_i}$ for each $t$ less than or equal to $T$.  Otherwise, $N_T^{\i}(\omega)$ is 1.  Denote by $N_T$ the sum of the $N_T^{\i}$ over all $i$, where $N_T$ can potentially take on the value infinity. The random variable $N_T$ counts the number of $p_i$-adic components in which a sample path of $Y$ wanders outside of the ring of $p_i$-adic integers by time $T$ and so Theorem \ref{DAfullMeas} implies that $N_T$ is almost surely finite.

\begin{proposition}
If $\sigma$ is finite, then all moments of $N_T$ are finite.
\end{proposition}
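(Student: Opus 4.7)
The plan is to exploit the fact that, under the product measure $P$, the indicators $N_T^{(i)}$ are mutually independent Bernoulli random variables (each depends only on the $i$-th coordinate path). Their parameter is pinned down by Proposition~\ref{three:ExitEqual}: the event $\{N_T^{(i)} = 0\}$ is precisely the event that the $i$-th component stays in $\mathds Z_{p_i}$ throughout $[0,T]$, whose probability equals $e^{-\sigma_i\alpha_i T}$. So writing $q_i = 1 - e^{-\sigma_i\alpha_i T}$ and using $1 - e^{-x}\le x$ together with $\alpha_i\in(0,1)$ (from \eqref{alpha}), we get $q_i \le \sigma_i T$, hence $\sum_i q_i \le \sigma T < \infty$ whenever $\sigma$ is finite.

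My preferred route is via the moment generating function, since finiteness of the MGF on a neighborhood of $0$ automatically gives finiteness of all moments. For each natural number $N$ set $S_N = \sum_{i=1}^N N_T^{(i)}$. Independence yields
\[
E[e^{tS_N}] \;=\; \prod_{i=1}^N \bigl(1 + q_i(e^t - 1)\bigr)
\;\le\; \exp\!\left((e^t-1)\sum_{i=1}^N q_i\right)
\;\le\; \exp\!\bigl((e^t-1)\sigma T\bigr),
\]
using $1+x \le e^x$. The bound is uniform in $N$. Because $S_N \uparrow N_T$ monotonically and $N_T$ is a.s.\ finite by Theorem~\ref{DAfullMeas}, Fatou's lemma applied to $e^{tS_N} \to e^{tN_T}$ gives $E[e^{tN_T}] \le e^{(e^t-1)\sigma T}$ for every $t \ge 0$. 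In particular the MGF of $N_T$ is finite at some positive $t$, which forces $E[N_T^k] < \infty$ for every natural number $k$.

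An alternative route, if one prefers to avoid the exponential, is to expand $E[S_N^k]$ combinatorially: the independence and $\{0,1\}$-valuedness of the $N_T^{(i)}$ collapse the multi-index sum into a finite linear combination of elementary symmetric sums $e_j(q_1,\ldots,q_N)$ with $j \le k$, each bounded by $(\sum_i q_i)^j/j! \le (\sigma T)^j/j!$; one then passes $N \to \infty$ via monotone convergence. Either argument is short, and the only mildly delicate point — the main ``obstacle,'' such as it is — is the limit interchange used to promote the bound from the partial sums $S_N$ to $N_T$ itself, which is handled routinely by Fatou or monotone convergence given that $N_T < \infty$ almost surely.
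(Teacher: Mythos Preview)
Your proof is correct and takes a genuinely different route from the paper's. The paper works directly with the probability mass function: it writes out $P(N_T=k)$ as a sum over $k$-subsets of indices, bounds it inductively to obtain $P(N_T=k)<P(A)\,(e^{T\beta})^k/k!$ with $\beta=\sum_i\sigma_i\alpha_i$, and then estimates $E[N_T^m]=\sum_k k^m P(N_T=k)$ by splitting the sum and controlling the tail by hand. You instead recognize $N_T$ as a sum of independent Bernoulli variables, bound the moment generating function of the partial sums $S_N$ uniformly by $\exp((e^t-1)\sigma T)$, and pass to the limit via Fatou (monotone convergence would serve equally well, since $e^{tS_N}$ is increasing for $t\ge 0$). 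Your argument is shorter and yields the stronger conclusion that $N_T$ has a finite exponential moment, $E[e^{tN_T}]\le e^{(e^t-1)\sigma T}$ for all $t\ge0$; the appeal to Theorem~\ref{DAfullMeas} is in fact unnecessary, as the MGF bound already forces $N_T<\infty$ almost surely. What the paper's approach buys in exchange is an explicit Poisson-type tail bound on $P(N_T=k)$, which could be of independent use.
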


\begin{proof}
Denote by $A$ the subset of all paths in $D(I)$ with the property that $\omega$ is in $A$ if and only if $\omega$ remains in $\ZA$ for all $t$ in $[0,T]$.  Denote by $\alpha_i$ and $\beta_i$ the quantities \[\alpha_i = 1 - \dfrac{p_i^b-1}{p_i^{1+b}-1} \quad {\rm and}\quad \beta_i = \sigma_i\alpha_i.\]  Let $\mathcal S_k$ be the set of all sequences of length $k$ that are valued in $\mathds N$ and that have distinct values in each of their $k$ places.  Denote by $\beta$ the sum \[\beta = \sum_i \beta_i.\] Independence of the components of the stochastic process $Y$ implies that %
\begin{align*}
P(N_t = k) &= \sum_{\substack{(i_1, \dots, i_k)\in \mathcal S_k\\i_1 < \cdots <i_k}}\big(1- \e^{-T\beta_{i_1}}\big) \cdots \big(1 - \e^{-T\beta_{i_k}}\big)P(A) \e^{T\beta_{i_1}}\cdots \e^{T\beta_{i_l}} 
\\& = \frac{1}{k!}\sum_{(i_1, \dots, i_k)\in  \mathcal S_k}\big(\e^{T\beta_{i_1}}-1\big) \cdots \big(\e^{T\beta_{i_k}}-1\big)P(A)
\\& = \frac{P(A)}{k!}\sum_{(i_1, \dots, i_k)\in  \mathcal S_k}\big(\e^{T\beta_{i_1}}-1\big) \cdots \big(\e^{T\beta_{i_{k-1}}}-1\big)\big(\e^{T\beta_{i_k}}-1\big).%
\end{align*}
The inequality \[\big(\e^{T\beta_{i_k}}-1\big) < \sum_{j\in \mathds N}\big(\e^{T\beta_{j}}-1\big)\] implies that 
\begin{align*}
P(N_t = k) & < \frac{P(A)}{k!}\sum_{(i_1, \dots, i_{k-1})\in  \mathcal S_{k-1}}\big(\e^{T\beta_{i_1}}-1\big) \cdots \big(\e^{T\beta_{i_{k-1}}}-1\big)\bigg(\sum_{j\in \mathds N}\big(\e^{T\beta_{j}}-1\big)\bigg)%
\\& = \frac{P(A)}{k!}\sum_{(i_1, \dots, i_{k-1})\in  \mathcal S_{k-1}}\bigg(\big(\e^{T\beta_{i_1}}-1\big) \cdots \big(\e^{T\beta_{i_{k-1}}}-1\big)\phantom{\frac{(T\beta)^3}{3!}}\\&\qquad\qquad\qquad\qquad\qquad\qquad\qquad\qquad \cdot \sum_{j\in \mathds N}\Big(T\beta_j + \frac{\big(T\beta_j\big)^2}{2!}+ \frac{\big(T\beta_j\big)^3}{3!} + \cdots\bigg)\bigg)%
\\& < \frac{P(A)}{k!}\sum_{(i_1, \dots, i_{k-1})\in  \mathcal S_{k-1}}\bigg(\big(\e^{T\beta_{i_1}}-1\big) \cdots \big(\e^{T\beta_{i_{k-1}}}-1\big)\phantom{\frac{(T\beta)^3}{3!}}\\&\qquad\qquad\qquad\qquad\qquad\qquad\qquad\qquad \cdot\bigg(T\beta + \frac{(T\beta)^2}{2!}+ \frac{(T\beta)^3}{3!} + \cdots\bigg)\bigg)
\\& < \frac{\e^{T\beta}}{k}\frac{P(A)}{(k-1)!}\sum_{(i_1, \dots, i_{k-1})\in S_{k-1}}\Big(\big(\e^{T\beta_{i_1}}-1\big) \cdots \big(\e^{T\beta_{i_{k-1}}}-1\big)\Big)
\\& = \frac{\e^{T\beta}}{k}P(N_t = k-1).
\end{align*}
A straightforward induction argument shows that \[P(N_T = k) < P(A)\frac{\left(\e^{T\beta}\right)^k}{k!}.\]  The estimates for $P(N_t = k)$ afford an estimate of the mean of $N_T$.  In particular, 
\begin{align*}{\rm E}\big[N_T\big] &= \sum_{k\in\mathds N} kP(N_t = k) \\ &< P(A)\sum_{k\in\mathds N} k\frac{\big(\e^{T\beta}\big)^k}{k!} \\ &= P(A)\sum_{k\in\mathds N} \frac{\big(\e^{T\beta}\big)^k}{(k-1)!} \\ &= P(A)\e^{T\beta}\sum_{k\in\mathds N} \frac{\big(\e^{T\beta}\big)^k}{k!} = P(A)\e^{T\beta}\e^{\e^{T\beta}}.
\end{align*}  Similar estimates are possible for all moments of $N_T$. If $m$ is a natural number, then%
\begin{align*}
{\rm E}\big[N_T^m\big] &= \sum_{k\in\mathds N} k^mP(N_T = k) \\ &< P(A)\sum_{k\in\mathds N} k^m\frac{\big(\e^{T\beta}\big)^k}{k!} 
\\&= P(A)\vast(\sum_{\substack{k\in\mathds N\\k< m}} k^m\frac{\big(\e^{T\beta}\big)^k}{k!} + \sum_{\substack{k\in\mathds N\\k\ge m}} k^m\frac{\big(\e^{T\beta}\big)^k}{k!}\vast)
\\& = P(A)\vast(\sum_{\substack{k\in\mathds N\\k< m}} k^m\frac{\left(\e^{T\beta}\right)^k}{k!} + \sum_{k\in \mathds N_0} (m+k)^m\frac{\left(\e^{T\beta}\right)^{(k+m)}}{(m+k)!}\vast)
\\& = P(A)\vast(\sum_{\substack{k\in\mathds N\\k< m}} k^m\frac{\left(\e^{T\beta}\right)^k}{k!} + \left(\e^{T\beta}\right)^{m}\sum_{k\in \mathds N_0} (m+k)^m\frac{\left(\e^{T\beta}\right)^{k}}{(m+k)!}\vast)
\\& = P(A)\vast(\sum_{\substack{k\in\mathds N\\k< m}} k^m\frac{\left(\e^{T\beta}\right)^k}{k!} + \left(\e^{T\beta}\right)^{m}\sum_{k\in \mathds N_0} \frac{(m+k)^m}{(m+k)(m+k-1)\cdots(k+1)}\frac{\left(\e^{T\beta}\right)^{k}}{k!}\vast)
\\& < P(A)\vast(\sum_{\substack{k\in\mathds N\\k< m}} k^m\frac{\left(\e^{T\beta}\right)^k}{k!} + \left(\e^{T\beta}\right)^{m}\max_k{\frac{(m+k)^m}{(m+k)(m+k-1)\cdots(k+1)}}\sum_{k\in \mathds N_0} \frac{\left(\e^{T\beta}\right)^{k}}{k!}\vast)
\\& = P(A)\vast(\sum_{\substack{k\in\mathds N\\k< m}} k^m\frac{\left(\e^{T\beta}\right)^k}{k!} + \left(\e^{T\beta}\right)^{m}\max_k{\frac{(m+k)^m}{(m+k)(m+k-1)\cdots(k+1)}}\e^{\e^{T\beta}}\vast)
\\& < P(A)\vast(\sum_{\substack{k\in\mathds N\\k< m}} k^m\frac{\left({\e}^{T\beta}\right)^k}{k!} + \left({\e}^{T\beta}\right)^{m}\frac{m^m}{m!}{\e}^{{\e}^{T\beta}}\vast).
\end{align*}
\end{proof}

%%%%%%%%%%%%%%%%%%%%%%%%%%%%%%%%%%%%%%%%%%%%%
%%%%%%%%%%%%%%%%%%%%%%%%%%%%%%%%%%%%%%%%%%%%%
%%%%%%%%%%%%%%%%%%%%%%%%%%%%%%%%%%%%%%%%%%%%%
%%%%%%%%%%%%%%%%%%%%%%%%%%%%%%%%%%%%%%%%%%%%%
%%%%%%%%%%%%%%%%%%%%%%%%%%%%%%%%%%%%%%%%%%%%%

\section{An Adelic Diffusion Equation}\label{four}

Given the summability of the diffusion constants, we show in this section that the adelic path measures of the previous section are the measures associated to adelic diffusion equations.  Recall that $\Omega_i$ is the characteristic function on $\mathds Z_{p_i}$.

\begin{lemma}\label{five:lem:cestimate}
The estimate
\[\frac{1}{\sqrt{2}} < \left|\left|\mathcal M_i\Omega_i\right|\right|_i < \sqrt{2}\] is independent of the parameter $b$.
\end{lemma}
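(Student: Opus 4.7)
The plan is to compute $\|\mathcal M_i\Omega_i\|_i^2$ explicitly as an integral over $\mathds Z_{p_i}$, reduce it to a geometric series using the sphere decomposition of $\mathds Z_{p_i}$, and then show the resulting closed form is sandwiched between $1/2$ and $1$ uniformly in $b$.

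First, since $\Omega_i$ is the characteristic function of $\mathds Z_{p_i}$, the function $\mathcal M_i\Omega_i$ is just $x \mapsto |x|_i^b \mathds{1}_{\mathds Z_{p_i}}(x)$, so
\[\|\mathcal M_i\Omega_i\|_i^2 = \int_{\mathds Z_{p_i}} |x|_i^{2b}\,{\rm d}\mu_i(x).\]
I would decompose $\mathds Z_{p_i}\setminus\{0\}$ into the disjoint union of spheres $S^i_{-k}(0)$ for $k \ge 0$, each of Haar measure $p_i^{-k}(1 - p_i^{-1})$, and on which $|x|_i^{2b}$ takes the constant value $p_i^{-2bk}$. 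The resulting series is geometric and sums to
\[\|\mathcal M_i\Omega_i\|_i^2 = (1 - p_i^{-1})\sum_{k=0}^{\infty} p_i^{-k(2b+1)} = \frac{1 - p_i^{-1}}{1 - p_i^{-(2b+1)}}.\]

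The second step is to bound this expression. Since $b > 0$, the denominator $1 - p_i^{-(2b+1)}$ lies strictly between $1 - p_i^{-1}$ and $1$, so the whole ratio lies strictly between $1 - p_i^{-1}$ and $1$. Using $p_i \ge 2$ one has $1 - p_i^{-1} \ge 1/2$, so $\|\mathcal M_i\Omega_i\|_i^2 \in (1/2, 1)$, which gives the claimed bounds $1/\sqrt 2 < \|\mathcal M_i \Omega_i\|_i < \sqrt 2$ (with room to spare on the upper side). The bounds do not depend on $b$ since they come only from the monotonicity of $1 - p_i^{-(2b+1)}$ in $b$ and the inequality $p_i \ge 2$.

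There is no real obstacle here: the computation is routine once the sphere decomposition is in place. The only mild subtlety is being careful that the lower bound $1/2$ holds for every prime $p_i$, which is why the factor $1 - p_i^{-1}$ is bounded below using the smallest possible prime; this is what secures the claim of independence of $b$ (and, in fact, also independence of $i$).
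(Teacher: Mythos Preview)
Your proof is correct and follows essentially the same route as the paper: compute $\|\mathcal M_i\Omega_i\|_i^2$ via the sphere decomposition of $\mathds Z_{p_i}$, sum the geometric series to obtain $(1-p_i^{-1})/(1-p_i^{-(2b+1)})$, and bound using $p_i\ge 2$ and $b>0$. Your upper bound is in fact slightly sharper than the paper's (you get $\|\mathcal M_i\Omega_i\|_i^2<1$ rather than $<2$), but the argument is otherwise identical.
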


\begin{proof}
Write an integral over $\mathds Q_{p_i}$ as a sum of integrals over circles to obtain the equalities 
\begin{align}\label{five:eq:intoverqpofMpOmega}
\big|\big|\mathcal M_i\Omega_i\big|\big|_i^2 & = \int_{\mathds Q_{p_i}}\big(|x|_i^b\Omega_i(x)\big)^2\,{\rm d}x\notag\\& = \int_{\mathds Z_{p_i}} |x|_i^{2b}\,{\rm d}x\notag\\& = \int_{S^i_0} |x|_i^{2b}\,{\rm d}x + \int_{S^i_{-1}} |x|_i^{2b}\,{\rm d}x + \int_{S^i_{-2}} |x|_i^{2b}\,{\rm d}x + \cdots.
\end{align}
Together with \eqref{five:eq:intoverqpofMpOmega}, the equalities \begin{align*}\int_{S^i_{-k}} |x|_i^{2b}\,{\rm d}x &= \big(p_i^{-k}\big)^{2b}{\rm vol}\big(S^i_{-k}\big) \\&= \big(p_i^{-k}\big)^{2b}{p_i}^{-k}\Big(1 - \frac{1}{p_i}\Big) = p_i^{-(2b+1)k}\Big(1 - \frac{1}{p_i}\Big),\end{align*} imply that 
\begin{align}\label{term:five:a}
\big|\big|\mathcal M_i\Omega_i\big|\big|_i^2 & = \sum_{k= 0}^\infty p_i^{-(2b+1)k}\Big(1 - \frac{1}{p_i}\Big)\notag\\ & = \Big(1 - \frac{1}{p_i}\Big)\frac{1}{1 - p_i^{-(2b+1)}} = \Big(1 - \frac{1}{p_i}\Big)\frac{p_i^{2b+1}}{p_i^{2b+1}-1}\end{align} The term $\big(1 - \frac{1}{p_i}\big)$ increases to 1 in $i$ and is bounded below by $\frac{1}{2}$ where the minimum is achieved when $p_i$ is 2.  For fixed $b$, the term $\frac{p_i^{2b+1}}{p_i^{2b+1}-1}$ decreases to 1 in $p_i$ and attains its maximum when $p_i$ is 2.  This second term in the product on the right hand side of \eqref{term:five:a} is also decreasing to 1 in $b$.  Since $b$ is assumed positive, this second term in the product is strictly bounded above by its value when $p_i$ is 2 and $b$ is 0, that is, it is bounded above by 2.  The inequality  \[\frac{1}{2} < \big|\big|\mathcal M_i\Omega_i\big|\big|_i^2 < 2\] therefore holds for any prime $p_i$ and any exponent $b$, thus proving the lemma.
\end{proof}

The unnecessarily rough bounds established by Lemma \ref{five:lem:cestimate} are sufficient for the goal of proving Proposition~\ref{five:prop:pretheorem} below.  Denote by $c_i$ the positive real number with the property that \[\big|\big|\M_i\Omega_i\big|\big|_i = c_i.\]  Define the multiplication operator $\M$ on each function $f$ on $\A$ by \[({\M}f)\big(a_1,\dots, a_n, \dots\big) = \big(\sigma_1|a_1|_{1}^b +\sigma_2|a_2|_{2}^b + \cdots\big)f\big(a_1,\dots, a_n, \dots\big).\] Recall that $|\cdot|_i$ is the $p_i$-adic absolute value, so $|\cdot|_1$ is the 2-adic absolute value, $|\cdot|_2$ is the 3-adic absolute value, and so on.  Denote by $\Delta_{\A}$ the operator \[\big(\Delta_\A f\big)(x)  = \big(\FA \M\FA^{-1}f\big)(x).\] Let $\Omega_\A$ be the characteristic function on $\mathds Z_\A$.  Denote an element $(a_1, a_2, a_3, \dots)$ of $\A$ simply by $a$ and use the notation ${\rm d}a$ rather than ${\rm d}\mu_\A(a)$ for integrals over subsets of $\A$.  Recall that a function $f$ on $\A$ is said to be a simple Schwartz-Bruhat function on $\A$ if for each natural number $i$ there is an $f_i$ in $SB(\mathds Q_{p_i})$ such that \[f(a) = \prod_{i\in \mathds N} f_i(a_i)\] and for almost all $i$ the function $f_i$ is the vacuum vector $\Omega_i$.

\begin{proposition}\label{five:prop:pretheorem}
The space $SB(\A)$ is in the domain of the operator $\Delta_\A$ if and only if $\sigma$ is finite.
\end{proposition}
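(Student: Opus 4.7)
Since $\FA$ is unitary, $f$ lies in the domain of $\Delta_\A = \FA \M \FA^{-1}$ if and only if $\M \FA^{-1} f \in L^2(\A)$, so the statement reduces to deciding when $\M$ maps $\FA^{-1}\bigl(SB(\A)\bigr)$ into $L^2(\A)$. By linearity it suffices to treat a simple adelic Schwartz-Bruhat function $f = \bigotimes_i f_i$ with $f_i = \Omega_i$ for every $i$ outside some finite set $F$, so that $\FA^{-1} f$ factors as $\bigotimes_i \mathcal F_i^{-1} f_i$.

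For the sufficiency direction I would decompose $\M = \sum_i \sigma_i \M_i^\A$, where $(\M_i^\A g)(a) = |a_i|_i^b g(a)$, and invoke the triangle inequality in $L^2(\A)$. The product structure of the Haar measure $\mu_\A$ yields
\[
\big\|\M_i^\A \FA^{-1} f\big\|_\A^{2} \;=\; \big\|\M_i \mathcal F_i^{-1} f_i\big\|_i^{2}\prod_{k\neq i}\|f_k\|_k^{2},
\]
which for $i\notin F$ equals $c_i^{2}\prod_{k\in F}\|f_k\|_k^{2}$. Lemma~\ref{five:lem:cestimate} bounds the $c_i^{2}$ uniformly by $2$, so the numerical series $\sum_i \sigma_i\big\|\M_i^\A \FA^{-1} f\big\|_\A$ is dominated by an $f$-dependent constant times $\sigma$. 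When $\sigma$ is finite, $\M\FA^{-1} f$ is then the sum of an absolutely convergent series in the Banach space $L^2(\A)$ and therefore lies in $L^2(\A)$, placing $f$ in the domain of $\Delta_\A$.

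For the necessity I would exhibit a single explicit witness, namely $\Omega_\A \in SB(\A)$. Since each $\Omega_i$ is self-dual, $\FA^{-1}\Omega_\A = \Omega_\A$, and a direct expansion using Fubini gives
\[
\big\|\M\,\Omega_\A\big\|_\A^{2} \;=\; \int_{\Z_\A}\!\bigg(\sum_i \sigma_i|a_i|_i^{b}\bigg)^{\!2}\mathrm{d}a \;=\; \sum_i \sigma_i^{2} c_i^{2} \;+\; 2\sum_{i<j}\sigma_i\sigma_j A_i A_j,
\]
where $A_i = \int_{\Z_{p_i}}|a_i|_i^{b}\,\mathrm{d}a_i$. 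A geometric series computation parallel to that in Lemma~\ref{five:lem:cestimate} yields $A_i = (1-1/p_i)/(1-p_i^{-(b+1)}) \ge \tfrac12$. Fixing any $i_0$, the cross-term sum therefore dominates $\tfrac12\sigma_{i_0}\sum_{i\neq i_0}\sigma_i$, which diverges whenever $\sigma = \infty$; hence $\Omega_\A$ is not in the domain of $\Delta_\A$.

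The main technical point to monitor is the simultaneous handling of the diagonal and off-diagonal contributions that arise when the sum defining $\M$ is squared; the triangle inequality sidesteps this concern in the sufficiency direction, while the positivity of the integrand makes the off-diagonal contribution automatically dominant in the necessity direction.
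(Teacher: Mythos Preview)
Your proof is correct and follows the same overall architecture as the paper: reduce to whether $\M$ sends $\FA^{-1}(SB(\A))$ into $L^2(\A)$, use $\Omega_\A$ as the witness for necessity by expanding the square and isolating the off-diagonal terms, and invoke the uniform bounds of Lemma~\ref{five:lem:cestimate}. The necessity arguments are essentially identical.

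The sufficiency arguments differ in a way worth recording. The paper first bounds $\|\M\Omega_\A\|_\A^2$ directly by expanding the square and estimating both diagonal and off-diagonal sums by $4\sigma^2$, and then treats a general simple $f$ by writing $\M f$ as a finite sum plus a tail that reduces to the $\Omega_\A$ case. You instead decompose $\M = \sum_i \sigma_i \M_i^\A$ and use the triangle inequality in $L^2(\A)$, obtaining an absolutely convergent series from the bound $\sum_i \sigma_i\|\M_i^\A \FA^{-1}f\|_\A \lesssim \sigma$. Your route is slightly cleaner: it handles all simple Schwartz--Bruhat functions at once without first isolating $\Omega_\A$, and it sidesteps the squaring altogether, so no separate bookkeeping of diagonal versus cross terms is needed in the sufficiency direction. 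The paper's direct expansion, on the other hand, yields the explicit two-sided estimate $\tfrac14\sigma_1(\sigma-\sigma_1) < \|\M\Omega_\A\|_\A^2 < 4\sigma^2$, which is marginally more informative but not required for the proposition.
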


\begin{proof}

Since $\mathcal F_\A^{-1}\Omega_\A$ is equal to $\Omega_\A$ and $\mathcal F_\A$ is unitary on $L^2(\mathds \A)$, the function $\Omega_\A$ is in the domain of $\Delta_A$ if and only if it is in the domain of $\mathcal M$.  Calculate $\big|\big| {\mathcal M}\Omega_{\A}\big|\big|_{\A}^2$ to obtain the equality 
\begin{align}\label{sec4:mmupperandlower}
\big|\big| {\mathcal M}\Omega_{\A}\big|\big|_{\A}^2 & = \bigg|\bigg|\sum_{i\in \mathds N} \sigma_i|a_i|_{i}^b \prod_{i\in \mathds N} \Omega_i(a_i)\bigg|\bigg|_{\A}^2\notag\\
& = \int_{\A} \bigg(\sum_{i\in \mathds N} \sigma_i |a_i|_{i}^b\bigg)^2\Omega_{\A}(a)\,{\rm d}a\notag\\
& = \int_{\A} \Bigg(\sum_{i\in \mathds N} \sigma_i^2|a_i|_{i}^{2b} \Omega_{\A}(a) + \sum_{\substack{i,j \in \mathds N\\i\ne j}} \sigma_i\sigma_j|a_i|_{i}^b|a_j|_{j}^b\Omega_{\A}(a)\Bigg)\,{\rm d}a\notag\\
& = \sum_{i\in \mathds N} \sigma_i^2\int_{\A} |a_i|_{i}^{2b} \Omega_{\A}(a)\,{\rm d}a + \sum_{\substack{i,j \in \mathds N\\i\ne j}} \sigma_i\sigma_j\int_{\A} |a_i|_{i}^b|a_j|_{j}^b\Omega_{\A}(a)\,{\rm d}a\notag\\
& = \sum_{i\in \mathds N} \sigma_i^2\int_{\mathds Z_{p_i}} |a_i|_{i}^{2b} \,{\rm d}a_i + \sum_{\substack{i,j \in \mathds N\\i\ne j}} \sigma_i\sigma_j\int_{\mathds Z_{p_i}} |a_i|_{i}^b\,{\rm d}a_i \cdot \int_{\mathds Z_j}|a_j|_{j}^b\,{\rm d}a_j.
\end{align}
The lower bound of Lemma~\ref{five:lem:cestimate} and \eqref{sec4:mmupperandlower} together imply that 
\begin{align}\label{sec4:mmbelow}
\big|\big| {\mathcal M}\Omega_{\A}\big|\big|_{\A}^2 & > \sum_{i\ne j} \sigma_i\sigma_j\int_{\mathds Z_{p_i}} |a_i|_{i}^b\,{\rm d}a_j \cdot \int_{\mathds Z_j}|a_j|_{j}^b\,{\rm d}a_j\notag\\%
&>  \frac{1}{4}\sum_{i\ne j} \sigma_i\sigma_j > \frac{1}{4}m_1 \sum_{i>1} \sigma_i.
\end{align}
The upper bound of Lemma~\ref{five:lem:cestimate} and \eqref{sec4:mmupperandlower} together imply that
\begin{align}\label{sec4:mmabove}
\big|\big| {\mathcal M}\Omega_{\A}\big|\big|_{\A}^2 & < 2\sum_{i\in \mathds N} \sigma_i^2 + 4\sum_{\substack{i,j \in \mathds N\\i\ne j}} \sigma_i\sigma_j\notag\\ & < 4\Bigg(\sum_{i\in \mathds N} \sigma_i^2 + \sum_{\substack{i,j \in \mathds N\\i\ne j}} \sigma_i\sigma_j\Bigg) = 4\bigg(\sum_{i\in \mathds N} \sigma_i\bigg)^2.
\end{align}
If ${\mathcal M}\Omega_{\A}$ is square integrable, then \eqref{sec4:mmbelow} implies that $\sigma - \sigma_1$ is finite, hence $\sigma$ is finite.  The function $\Omega_\A$ is in $SB(\A)$ and so if $SB(\A)$ is in the domain of $\Delta_\A$, then $\sigma$ is finite.

If $\sigma$ is finite, then \eqref{sec4:mmabove} implies that ${\mathcal M}\Omega_{\A}$ is square integrable and so $\Omega_\A$ is in the domain of $\Delta_\A$.   To show that any simple Schwartz-Bruhat function on $\A$ is in the domain of $\Delta_\A$, it suffices to show that any simple Schwartz-Bruhat function on $\A$ is in the domain of $\mathcal M$.  If $f$ is a simple Schwartz-Bruhat function on $\A$, then there is a natural number $\ell$ such that for each $i$ in $\{1, \dots, \ell\}$, the function $f_i$ is in $SB(\mathds Q_{p_i})$ and for any $a$ in $\A$, \begin{equation}\label{4:prodforf}f(a) = \prod_{i\in \{1, \dots, \ell\}} f_i(a_i)\cdot \prod_{i> \ell}\Omega_i(a_i).\end{equation}  Use \eqref{4:prodforf} to obtain the equality \begin{align}\label{sec4:finitemuandfinitesquare}(\mathcal Mf)(x) &= \sum_{j\in\{1, \dots, \ell\}} \sigma_j|a_j|_j^bf_j(a_j)\prod_{\substack{i\in\{1, \dots, \ell\}\\i\ne j}} f_i(a_i)\cdot \prod_{i> \ell}\Omega_i(a_i) \notag\\&\hspace{2.5in}+ \sum_{j >  \ell} \sigma_j|a_j|_j^b\Omega_j(a_j)\prod_{i\in \{1, \dots, \ell\}} f_i(a_i)\cdot \prod_{\substack{i>\ell\\i\ne j}}\Omega_i(a_i).\end{align} Since for each $j$ in $\{1, \dots, \ell\}$ the function $g_j$ given by \[g_j(a_j) = |a_j|^bf(a_j)\] is square integrable, the square integrability of $\mathcal M \Omega_\A$ implies the square integrability of $f$. Finiteness of $\sigma$ therefore implies that simple Schwartz-Bruhat functions on $\A$ are in the domain of $\Delta_\A$.  Since finite sums of square integrable functions are square integrable, finiteness of $\sigma$ implies that any Schwartz-Bruhat function on $\A$ is in the domain of $\Delta_\A$.
\end{proof}

\begin{theorem}
The operator $\Delta_\A$ is essentially self adjoint on $SB(\A)$ if and only if $\sigma$ is finite.
\end{theorem}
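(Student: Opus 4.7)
The plan is to reduce, via the unitary $\FA$, to showing that the multiplication operator $\mathcal M$ on $SB(\A)$ is essentially self adjoint if and only if $\sigma$ is finite. Since $\FA$ preserves $SB(\A)$ and $\Delta_\A = \FA \mathcal M \FA^{-1}$, essential self adjointness of $\Delta_\A|_{SB(\A)}$ is equivalent to essential self adjointness of $\mathcal M|_{SB(\A)}$.

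For the forward implication, I would invoke Proposition~\ref{five:prop:pretheorem} directly. If $\sigma$ is infinite, then $SB(\A)$ is not contained in the domain of $\Delta_\A$, so the restriction to $SB(\A)$ fails to be a densely defined symmetric operator on $\mathcal H$, and the question of essential self adjointness is vacuous. So the content is the reverse implication: assuming $\sigma < \infty$, deduce that $\mathcal M|_{SB(\A)}$ is essentially self adjoint. First observe that $\mathcal M$ is symmetric on $SB(\A)$ because the multiplier $m(a) = \sum_i \sigma_i |a_i|_i^b$ is real valued and Proposition~\ref{five:prop:pretheorem} ensures $\mathcal M f \in \mathcal H$ for $f \in SB(\A)$.

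The main work is to verify essential self adjointness, for which I would invoke Nelson's analytic vector theorem: it suffices to exhibit a dense set of analytic vectors for $\mathcal M$ in $SB(\A)$. The claim is that every simple adelic Schwartz-Bruhat function is in fact an entire analytic vector. Fix a simple $f = \prod_i f_i$ with $f_i = \Omega_i$ for all $i > N$. Each $f_i$ has compact open support $K_i$, with $K_i = \mathds Z_{p_i}$ for $i > N$, so $|a_i|_i \le M_i := \max_{a_i \in K_i} |a_i|_i$, and $M_i = 1$ for $i > N$. On the support of $f$ we therefore have the uniform bound
\[
m(a) \;\le\; \sum_{i \le N} \sigma_i M_i^b \;+\; \sum_{i > N} \sigma_i \;=:\; C_f,
\]
and $C_f$ is finite since $\sigma < \infty$. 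Consequently $\|\mathcal M^n f\|_\A \le C_f^{\,n} \|f\|_\A$ for all $n$, and so $\sum_{n \ge 0} \|\mathcal M^n f\|_\A\, t^n/n!$ converges for every $t > 0$. A general element of $SB(\A)$ is a finite linear combination of simple functions, each an entire analytic vector, hence is itself an entire analytic vector. Since $SB(\A)$ is dense in $\mathcal H$, Nelson's theorem applies and yields essential self adjointness of $\mathcal M|_{SB(\A)}$, whence of $\Delta_\A|_{SB(\A)}$.

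The one delicate point is choosing the right sufficient condition for essential self adjointness; the analytic vector route is clean because the compact support structure of Schwartz-Bruhat functions combines naturally with the summability $\sum \sigma_i < \infty$ to give a uniform bound for $m$ on each support, which is exactly what is needed to make the power series converge. An alternative route via showing that $\mathrm{ran}(\mathcal M \pm i)$ is dense in $\mathcal H$ would be more laborious and would require an explicit approximation scheme, so Nelson's criterion is the natural tool.
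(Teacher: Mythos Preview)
Your proof is correct, and both you and the paper dispose of the forward direction identically by invoking Proposition~\ref{five:prop:pretheorem}. The reverse directions differ. The paper's argument is a single sentence: once $SB(\A)$ is known to lie in the domain, $\Delta_\A|_{SB(\A)}$ is unitarily equivalent via $\FA$ to a densely defined real-valued multiplication operator, and the paper takes as standard that such an operator is essentially self adjoint on $SB(\A)$. Your route through Nelson's analytic vector theorem is genuinely different and more explicit: you exploit the compact support of each simple Schwartz--Bruhat function together with the summability $\sigma<\infty$ to bound the multiplier $m(a)=\sum_i\sigma_i|a_i|_i^b$ uniformly on that support, which immediately gives $\|\mathcal M^n f\|\le C_f^{\,n}\|f\|$ and hence entire analytic vectors. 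Your argument is self-contained and makes transparent exactly where the finiteness of $\sigma$ and the structure of $SB(\A)$ enter, at the cost of being longer; the paper's version is maximally brief but leaves the verification that $SB(\A)$ is actually a core for the real multiplication operator implicit.
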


\begin{proof}
If $\Delta_\A$ is essentially self adjoint on $SB(\A)$, then it is defined on $SB(\A)$ and Proposition~\ref{five:prop:pretheorem} implies that $\sigma$ is finite.

Suppose that $\sigma$ is finite.  Proposition \ref{five:prop:pretheorem} implies that $SB(\A)$ is in the domain of $\Delta_\A$ and so $\Delta_\A$ is a densely defined operator.  In this case, the operator $\Delta_\A$ is essentially self adjoint on $SB(\A)$ since it is the Fourier transform of a densely defined, real valued multiplication operator.
\end{proof}

Extend in the same way in which we defined the $p$-adic Vladimirov operator the adelic Laplacian $\DA$ to act on a function $f$ with \[f\colon \mathds R_+ \times \A \to \mathds R\] such that for each positive real $t$, $f(t,\cdot)$ is in the domain of $\DA$.  Differentiation with respect to the time variable of such a function $f$ is performed as it is in the $p$-adic setting.  In particular, if $a$ is in $\A$, then \[\dfrac{{\rm d}f}{{\rm d}t}(t,a) = \dfrac{{\rm d}f(\cdot, a)}{{\rm d}t}(t).\]  The adelic diffusion equation is the equation given by \begin{align}\label{eqAd} \frac{{\rm d}f}{{\rm d}t}(t,a) = -\Delta_\A f (t,a).\end{align}

\begin{theorem}\label{6:thm:pre7}
If $\rho$ is the fundamental solution to \eqref{eqAd}, then \begin{equation}\label{AdelicHeatFundSolution}\rho(t,a) = \big(\mathcal F^{-1}_{\A}\e^{-t\sum_i\sigma_i|\cdot|_i^b}\big)(a).\end{equation} For fixed positive $t$, the function $\rho(t, \cdot)$ is a probability density function on $\A$ that converges as $t$ tends to 0 from the right to the Dirac measure on $\mathds A$ that is concentrated at 0.  For every $x$ in $\A$, the function $\rho$ gives rise to a path measure $P_x^\A$ on $D(I\colon \A)$ that is conditioned so that paths almost surely start at $x$ at time 0.  When $x$ is equal to $(x_i)$, the measure $P_x^\A$ agrees with the product measure $\otimes_{i}P^{\i}_{x_i}$ on the adelic paths viewed as a subset of $D(I)$.
\end{theorem}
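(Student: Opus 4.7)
The plan is to exploit the tensor product structure underlying the adelic Fourier transform and the multiplication operator $\mathcal M$. Since $\mathcal M$ is multiplication by $\sum_i \sigma_i |a_i|_i^b$ and the exponential satisfies
\[
\e^{-t\sum_i \sigma_i |a_i|_i^b} = \prod_{i\in\mathds N} \e^{-t\sigma_i |a_i|_i^b},
\]
while $\FA^{-1}$ acts as an infinite tensor product of the $\mathcal F_i^{-1}$, the inverse Fourier transform factors as
\[
\rho(t, a) = \prod_{i\in\mathds N} \rho^i(t, a_i).
\]
To verify that $\rho$ solves \eqref{eqAd}, I would pass to the Fourier picture where $-\DA$ becomes multiplication by $-\sum_i \sigma_i |b_i|_i^b$. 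Differentiating the Fourier transform in $t$ produces exactly this multiplier applied to $\e^{-t\sum_i \sigma_i |b_i|_i^b}$, so the equation is satisfied there. Interchanging differentiation and inverse Fourier transform is justified on $SB(\mathds A)$ by the finiteness of $\sigma$ together with Proposition~\ref{five:prop:pretheorem}.

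For the density and Dirac claims, nonnegativity of $\rho(t, \cdot)$ is inherited factor-by-factor from the $p_i$-adic fundamental solutions $\rho^i(t, \cdot)$, and Tonelli's theorem gives
\[
\int_\A \rho(t, a)\, {\rm d}a = \prod_{i\in\mathds N} \int_{\mathds Q_{p_i}} \rho^i(t, a_i)\, {\rm d}\mu_i(a_i) = 1.
\]
For weak convergence to the Dirac measure at $0$ as $t \to 0^+$, I would appeal to the Fourier characterization: the adelic Fourier transform of $\rho(t, \cdot)$ is $\e^{-t\sum_i \sigma_i |\cdot|_i^b}$, which is bounded by $1$ and converges pointwise to $1$ on $\A$. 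Dominated convergence on the adelic dual then pushes the convergence back to the configuration side.

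For the path measure, the Chapman--Kolmogorov identity $\rho(s, \cdot)*\rho(t, \cdot) = \rho(s+t, \cdot)$ holds at the level of Fourier multipliers, so a standard Skorokhod-space construction in the spirit of \cite{var97} produces the family $P_x^\A$. To match $P_x^\A$ with $\otimes_{i} P^i_{x_i}$ on $D(I\colon\A)$, I would compare on simple cylinder sets whose constraint sets are products of Borel subsets of $\mathds Q_{p_i}$ that almost always equal $\mathds Z_{p_i}$. On such a set the adelic finite-dimensional distribution is an iterated integral against the transition kernels $\rho(t_k - t_{k-1}, \cdot)$, and the factorization $\rho=\prod_i \rho^i$ combined with Tonelli rewrites this as the corresponding product of $p_i$-adic expressions of the form \eqref{meas}. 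A monotone class argument extends the agreement to all cylinder sets of $D(I\colon\A)$, and Theorem~\ref{DAfullMeas} guarantees both measures assign full mass there.

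The main obstacle I anticipate is rigorously justifying the interchange of the infinite tensor product with the inverse Fourier transform in the identity $\rho(t, a) = \prod_i \rho^i(t, a_i)$. One must establish it first on simple Schwartz--Bruhat test functions (where the product is genuinely finite because almost all factors are vacuum vectors) and then upgrade to an almost-everywhere statement about the kernels themselves, which requires controlling the tail products $\prod_{i > N}\rho^i(t, a_i)$ on $\mathds Z_\A$ and invoking $\sigma < \infty$ in a manner reminiscent of the estimates in the proof of Proposition~\ref{five:prop:pretheorem}. Once this identification is secured, the remaining density, convergence, and measure-identification claims follow by routine arguments.
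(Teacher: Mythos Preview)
Your proposal is correct and follows essentially the same architecture as the paper: solve the equation on the Fourier side, factor $\rho(t,a)=\prod_i\rho^i(t,a_i)$ via the tensor structure, deduce the probability-density claim from the factors, and identify $P_x^\A$ with $\otimes_i P^i_{x_i}$ by comparing on simple cylinder sets with restricted routes and extending via a $\pi$-system (monotone class) argument.

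The one substantive divergence is the Dirac-convergence step. The paper argues directly on the configuration side: for any basic open neighborhood $B'$ of $0$ it uses the explicit lower bound $\int_{\mathds Z_{p_i}}\rho^i(t,a_i)\,{\rm d}a_i\ge \e^{-\sigma_i t}$ (and its radius-$p_i^{-n}$ variant) to show $\int_{B'}\rho(t,a)\,{\rm d}a\ge \e^{-\sigma t}\prod_{s\le N}\e^{-\sigma_{j_s}t p_{j_s}^{-n_s b}}\to 1$. Your route via pointwise convergence of the characteristic function $\e^{-t\sum_i\sigma_i|\cdot|_i^b}\to 1$ and dominated convergence against $\FA f$ for $f\in SB(\A)$ is also valid; it yields convergence of $\int f\rho(t,\cdot)$ to $f(0)$ on $SB(\A)$, and since all measures involved are probabilities this vague convergence upgrades to weak convergence. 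The paper's method is self-contained and reuses the exit-time estimates already in hand, while yours is shorter but implicitly invokes a L\'evy-continuity-type principle on $\A$; either buys the same conclusion.
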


\begin{proof}
Take the adelic Fourier transform of both sides of \eqref{eqAd} to obtain the pseudo differential equation \[\dfrac{{\rm d}\mathcal F_\A f}{{\rm d}t}(t,a) = -\big(\mathcal M \mathcal F_\A f\big) (t,a) = -\bigg(\sum_{i\in\mathds N} \sigma_i|a_i|^b\bigg) \mathcal F_\A f(t,a),\] which has \begin{align}\label{Sec5:eq:inv}\big(\mathcal F_\A f\big)(t,a) = \e^{-t\sum_{i\in\mathds N} \sigma_i|a_i|^b}\end{align} as its only solution up to a constant multiple.  The inverse Fourier transform in the $a$ coordinate of the function on the right hand side of \eqref{Sec5:eq:inv} is the desired function $\rho$.

Take $a$ to be an element of $\A$ and let $a_i$ be the $i^{\rm th}$ component of $a$. The function \[\phi(t, \cdot) = e^{-t\sum_i\sigma_i|\cdot|_i^b}\] is formally an infinite product of functions on $L^2(\mathds Q_{p_i})$.  Since the sum of the $\sigma_i$ is finite, $\phi(t, \cdot)$ is the limit in the $L^2(\A)$ norm of a sequence of well-defined square integrable functions on $\mathds A$ and itself in $L^2(\A)$, implying that its inverse adelic Fourier transform is just the product of the inverse Fourier transforms of the $p_i$-adic components. The function $\rho$ is therefore given by the equality \[\rho(t,a) = \prod_{i\in\mathds N} \rho^i(t,a_i),\] where $\rho^i$ is a solution to \eqref{eq1}.  Since each $\rho^i(t, \cdot)$ is a probability density function, $\rho$ is positive as a product of positive functions and of total mass 1 as an integral over the adeles is a product of the integral of all of its $p$-adic components. The function $\rho$ is, therefore, a probability density function.

To show that $\rho(t,\cdot)$ converges in the weak-$\ast$ topology on the space of measures on $\mathds A$ to a Dirac measure on $\mathds A$ that is concentrated at 0, prove that for any bounded continuous function $f$ on $\mathds A$, \[\lim_{t\to 0^+} \int_\A f(a)\rho(t,a)\;{\rm d}a  = g(0).\] It suffices to show that if $B$ is an open subset of $\mathds A$ that contains 0, then \[\lim_{t\to 0^+} \int_B \rho(t,a)\;{\rm d}a  = 1.\]  Suppose to this end that $B$ is an open subset of $\mathds A$ that contains 0.  There is a natural number $N$ and bijections $j$ and $n$ from $\mathds N$ to itself such that $B$ contains an open set $B^\prime$ with \[B^\prime = \prod_{i\in\mathds N} O_i\] with the property that $O_i$ equals $\mathds Z_{p_i}$ if $i$ is not equal to $j_s$ with $s$ in $\{1, \dots, N\}$ and for each $s$ in $\{1, \dots, N\}$, $O_{j_s}$ is equal to $B^{j_s}_{-{n_s}}(0)$. If \begin{align}\label{diracmeasurepf}\lim_{t\to 0^+} \int_{B^\prime}\rho(t,a)\,{\rm d}a = 1,\end{align} then the same equality holds where the integral is taken over $B$.  Integrals of simple adelic functions are products of integrals over the $p$-adic components, which implies that \begin{align}\label{sec4:prodineqatend}\int_{B^\prime}\rho(t,x)\,{\rm d}a &= \prod_{s\in\mathds N}\int_{O_{j_s}}\rho^{\,j_s}\big(t,a_{j_s}\big)\,{\rm d}a_{j_s}\notag\\ & = \prod_{s\leq N}\int_{O_{j_s}}\rho^{\,j_s}\big(t,a_{j_s}\big)\,{\rm d}a_{j_s} \cdot \prod_{s > N}\int_{O_{j_s}}\rho^{\,j_s}\big(t,a_{j_s}\big)\,{\rm d}a_{j_s}.\end{align}

A straightforward modification of the arguments in \cite{var97} shows that, for positive $t$, the density function $\rho^i(t,x)$ for the random variable $X_t^{\i}$ satisfies the equality
\begin{align}\label{sec3:formulaforf}\rho^i(t,x_i) &= \sum_{r\in \mathds Z} e^{-\sigma tp_i^rb}\int_{S_r} \mathds \chi(x_iy_i)\,{\rm d}y_i\notag\\ &= \sum_{r\in \mathds Z} \Big(\e^{-\sigma tp_i^{rb}} - \e^{-\sigma tp_i^{(r+1)b}}\Big)\int_{B^i_r} \mathds \chi(xy)\,{\rm d}y\notag\\& = \sum_{r\in\mathds Z}\Big(\e^{-\sigma t p_i^{rb}} - \e^{-\sigma  t p_i^{(r+1)b}}\Big)p_i^r{\mathds 1}_{p_i^{-r}}(x_i).\end{align} Suppose that $\nu$ is an integer and use equation \eqref{sec3:formulaforf} for $\rho^i$ to see that \begin{align}\label{nuiszero}\int_{|a_i|\leq p_i^\nu}\rho^i\big(t,a_i\big)\,{\rm d}a_i &=  \int_{B^i_\nu} \rho(t,x_i)\,{\rm d}x_i\notag\\ &= \sum_{r\in \mathds Z} \Big(\e^{-\sigma_i tp_i^{rb}} - \e^{-\sigma_i tp_i^{(r+1)b}}\Big)p_i^r\int_{B^i_\nu} \mathds 1_{p_i^{-r}}(x)\,{\rm d}x_i\notag\\& = \e^{-\sigma_i tp_i^{-\nu b}} + \sum_{r\leq -\nu-1}p_i^{\nu+r}\Big(\e^{-\sigma_i tp_i^{rb}} - \e^{-\sigma_i tp_i^{(r+1)b}}\Big)\ge \e^{-\sigma_itp_i^{-\nu b}}.\end{align} Set $\nu$ to be equal to 0 in \eqref{nuiszero} to obtain the inequality \begin{align}\label{nuisnowzero}\int_{\mathds Z_{p_i}} \rho^i(t,a_i)\,{\rm d}a_i \ge  \e^{-\sigma_it}.\end{align} Inequality \eqref{nuisnowzero} together with \eqref{sec4:prodineqatend} implies that \begin{align}\label{sec4:prodineqatendd}\int_{B^\prime}\rho(t,a)\,{\rm d}a & > \prod_{s\leq N} \e^{-m_{j_s}tp_{j_s}^{-n_sb}}\cdot \prod_{s > N} \e^{-m_{j_s}t}\notag\\& \ge \e^{-\sigma t}\prod_{s\leq N} \e^{-m_{j_s}tp_{j_s}^{-n_sb}}.\end{align} The finiteness of $\sigma$ implies that \[\lim_{t\to 0^+} \e^{-\sigma t} = 1.\] The second term in the product \eqref{sec4:prodineqatendd} tends to 1 as $t$ tends to 0 from the right as the finite product of terms that have this property.  The integral $\int_{B^\prime}\rho(t,a)\,{\rm d}a$ is bounded above by 1, implying \eqref{diracmeasurepf}.

A \emph{restricted history} of the space of all paths on $\A$ is a history such that each place of a route of $h$ is a product of $p$-adic balls that are almost always the ring of integers in the respective $p$-adic component.  Suppose that $h$ is a restricted history of the space of all paths on $\A$ with epoch $e(h)$ and route $U(h)$ where \[U(h) = \big(U(h)_0, \dots, U(h)_{\ell(h)}\big).\]  If $U(h)_0$ does not contain 0, then define $P^{\A}(C(h))$ as equal to 0.  If $U(h)_0$ equals $\{0\}$, then define $P^{\A}(C(h))$ by \begin{align}\label{measadelica}P^{\A}(C(h)) &= \int_{U(h)_1} \cdots \int_{U(h)_{\ell(h)}} \rho\big(e(h)_1, z^1\big) \rho\big(e(h)_2 - e(h)_1, z^2 - z^1\big)\notag\\&\qquad\qquad\qquad\qquad\qquad\cdots \rho\big(e(h)_{\ell(h)} - e(h)_{\ell(h)-1}, z^{\ell(h)} - z^{\ell(h)-1}\big) \,{\rm d}z^{\ell(h)}\cdots \,{\rm d}z^1.\end{align} Denote by $U^i(h)_j$ the $i^{\rm th}$ component of the $j^{\rm th}$ place of the route of $h$ and by $U^i(h)$ the route \[U^i(h) = \big(U^i(h)_0, \dots, U^i(h)_{\ell(h)}\big)\] whose entries are subsets of $\mathds Q_{p_i}$.  Denote by $C(h)_i$ the cylinder set with the same epoch $e(h)$ and whose route is $U^i(h)$. For each $i$ and $j$, the set $U^i(h)_j$ is a ball in $\mathds Q_{p_i}$.  If $z^j$ is in $\A$, then denote by $z_i^j$ the $i^{\rm th}$ component of $z^j$.  Use this notation and the fact that integrals of products of simple adelic functions are products of the integrals of the $p$-adic factors to simplify \eqref{measadelica} and obtain
\begin{align*}
P^{\A}(C(h))
&= \int_{U(h)_1} \cdots \int_{U(h)_{\ell(n)}} \bigg(\prod_{i\in\mathds N} \rho^i\big(e(h)_1, z^1\big) \rho^i\big(e(h)_2 - e(h)_1, z^2 - z^1\big)\\&\qquad\qquad\qquad\qquad\qquad\cdots \rho^i\big(e(h)_{\ell(h)} - e(h)_{\ell(h)-1}, z^{\ell(h)} - z^{\ell(h)-1}\big)\bigg) \,{\rm d}z^{\ell(h)}\cdots \,{\rm d}z^1\\
&= \prod_{i\in\mathds N}\int_{U^i(h)_1} \cdots \int_{U^i(h)_{\ell(h)}} \rho^i\big(e(h)_1, z_i^1\big) \rho^i\big(e(h)_2 - e(h)_1, z_i^2 - z_i^1\big)\\&\qquad\qquad\qquad\qquad\qquad\cdots \rho^i\big(e(h)_{\ell(h)} - e(h)_{\ell(h)-1}, z_i^{\ell(h)} - z_i^{\ell(h)-1}\big) \,{\rm d}z_i^{\ell(h)}\cdots \,{\rm d}z_i^1\\
&= \prod_{i\in\mathds N} P^{i}(C(h)_i),
\end{align*}
hence \[P^{\A}(C(h)) = \prod_{i\in\mathds N} P^{i}(C(h)_i).\]  The measure $P^{\A}$ therefore agrees on the simple cylinder sets of $D(I\colon \A)$ with restricted histories with the product measure over the $p$-adic components.  Since these subsets of $D(I\colon \A)$ form a $\pi$-system that generates the cylinder sets, the two measures agree on the cylinder sets, \cite{bil1}. For any $P^\A_x$ measurable subset $A$ of paths, the probability $P^{\A}_x(A)$ is equal to $P^{\A}(A-x)$ and for each $i$ the measure $P^i_{x_i}(A^i)$ is equal to $P^i(A^i -x_i)$, where $A^i$ is the set of paths given by the $i^{\rm th}$ components of paths of $A$.  The equality of the measures concentrated on the paths initially at 0 implies the more general statement of equality for measures concentrated on the paths starting at an arbitrary $x$ in $\A$.
\end{proof}

\begin{theorem}\label{6:thm:7}
Suppose that $x$  and $y$ are in $\A$, that $\sigma$ is finite, and that $I$ is a time interval containing the positive real number $t$. Denote by $P^\A_{t, x, y}$ the measure $P^\A_x$ on $D(I\colon \A)$ conditioned to give full measure to the paths that take value $y$ at time $t$.  For each natural number $i$, denote by $P^{i}_{t, x_i,y_i}$ the measure $P^{i}_{x_i}$ conditioned to give full measure to the set of paths in $D\big(I\colon \mathds Q_{p_i}\big)$ that take value $y_i$ at time $t$.  The product measure given by $\otimes_{i\in\mathds N}P^{i}_{t, x_i, y_i}$ gives full measure to the set  $D(I\colon \A)$ viewed as a subset of the product space.  Furthermore, the measures $P^\A_{t, x, y}$ and $\otimes_{i\in\mathds N}P^{i}_{t, x_i, y_i}$ agree on $D(I\colon \A)$.
\end{theorem}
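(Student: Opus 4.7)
The proof parallels Theorems~\ref{DAfullMeas} and~\ref{6:thm:pre7}: the first assertion is the bridge analogue of the full-measure statement of Theorem~\ref{DAfullMeas}, and the second reprises the cylinder-set identification in Theorem~\ref{6:thm:pre7} for the conditioned measure. The common engine is the product decomposition $\rho = \prod_i \rho^i$ established in the proof of Theorem~\ref{6:thm:pre7}, which causes the adelic bridge kernel to factor componentwise.

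For the full-measure claim, fix a positive real $T$ in $I$ with $T \ge t$. Since $x$ and $y$ are adelic, for almost all $i$ both $x_i$ and $y_i - x_i$ lie in $\mathds Z_{p_i}$. For each such $i$, Propositions~\ref{three:ExitCondInequal} and~\ref{three:ExitEqual}, together with translation invariance of the unconditioned measure, give
\[
P^i_{t, x_i, y_i}\big(||X - x_i||_T \le 1\big) \ge P^i_{x_i}\big(||X - x_i||_T \le 1\big) = \e^{-\sigma_i \alpha_i T}.
\]
Reinterpreting the sets $D_M^T$ from the proof of Theorem~\ref{DAfullMeas} with respect to the product bridge measure therefore yields $\otimes_i P^i_{t, x_i, y_i}\big(D_M^T\big) \ge \e^{-\sigma_M T}$, which tends to one as $M \to \infty$ because $\sigma$ is finite. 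The bounded versus unbounded case split on $I$ then concludes the argument verbatim as in the proof of Theorem~\ref{DAfullMeas}.

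For the identification of measures, the plan is to establish agreement of $P^\A_{t, x, y}$ with $\otimes_i P^i_{t, x_i, y_i}$ on the $\pi$-system of restricted cylinder sets of $D(I\colon \A)$ and then extend by Dynkin's theorem. For a restricted history with epoch $(s_1 < \cdots < s_n)$, the adelic bridge probability of the associated cylinder set is obtained by integrating over the route a kernel built from ratios of $\rho$ at the sample times. The product decomposition of $\rho$ causes this kernel to factor as a product over $i$, and the integration separates accordingly into a product of $p_i$-adic integrals, each of which identifies with the corresponding $p_i$-adic bridge probability of $C(h)_i$. This yields $P^\A_{t, x, y}(C(h)) = \prod_i P^i_{t, x_i, y_i}(C(h)_i)$, and the translation reduction from general $x$ to the origin is inherited from the proof of Theorem~\ref{6:thm:pre7}.

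The main technical delicacy is the treatment of sample times relative to the conditioning time $t$: times before $t$ contribute the ratio bridge kernel, times after $t$ contribute the unconditioned transition density started from $y$, and any route entry at $s_j = t$ collapses to evaluation at $y$. Each of these contributions factors over the $p$-adic components by Theorem~\ref{6:thm:pre7}, so the overall decomposition proceeds without new analytic input. Beyond this bookkeeping, the argument is a direct transcription of the cylinder-set calculation in the proof of Theorem~\ref{6:thm:pre7}.
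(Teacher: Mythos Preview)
Your argument for the full-measure claim matches the paper's: both invoke Proposition~\ref{three:ExitCondInequal} to bound the bridge exit probability below by the unconditioned one and then rerun the proof of Theorem~\ref{DAfullMeas}.

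For the identification of measures your route differs from the paper's. You invoke the Doob $h$-transform description of the bridge---the finite-dimensional distributions of $P^\A_{t,x,y}$ on an epoch $(s_1<\cdots<s_n)$ are given by the ratio kernel $\rho(s_1,\cdot)\cdots\rho(t-s_n,\cdot)/\rho(t,y-x)$---and observe that since $\rho=\prod_i\rho^i$ this kernel factors over $i$, so that integration over a restricted route separates into a product of $p_i$-adic bridge probabilities. The paper instead takes the operational definition $P^\A_{t,x,y}(C(h))=\lim_{\varepsilon^{(n)}\to 0}P^\A_x\big(C(h)\,\big|\,X_t\in y+B_{\varepsilon^{(n)}}(0)\big)$, splits the resulting product at a large index $N$, and uses Propositions~\ref{three:ExitEqual} and~\ref{three:ExitCondInequal} to show that both the conditional tail $\prod_{i>N}P^i_{x_i}(\,\cdot\,|X^i_t\in\cdots)$ and the bridge tail $\prod_{i>N}P^i_{t,x_i,y_i}(\cdot)$ are within $\varepsilon$ of~$1$; the finite head converges to the product of $p_i$-adic bridges as the conditioning balls shrink, and the $\varepsilon$-argument closes. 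Your approach is shorter and avoids the tail estimates entirely, but it leans on the ratio-kernel representation of the adelic bridge, which the paper does not state explicitly (the bridge is introduced only as a conditioned measure, with existence deferred to \cite{var97}); you should either cite this as a standard fact for Markov processes with transition densities or verify it here. The paper's approach is more self-contained against its own definitions and reuses the exit-probability machinery already in hand, at the cost of a longer computation.
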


\begin{proof}

  Proposition~\ref{three:ExitCondInequal} implies that if $t$ is a non-negative real number, then \begin{align*}P^{i}_{t, x_i, y_i}\big(||X||_t\leq 1\big) &= P^{i}_{x_i}\big(||X||_t\leq 1\big|X_t = y_i\big) \\&\ge P^{i}_{x_i}\big(||X||_t\leq 1\big) = {\rm e}^{-\sigma_i\alpha_i tp_i^{-b}}.\end{align*}  Use the above estimate rather than Proposition~\ref{three:ExitEqual} and follow the argument used in the proof of Theorem~\ref{DAfullMeas} to see that the conditioned measures give the adelic paths full measure.

For each natural number $n$, let $\varepsilon^{(n)}$ be a sequence $\big(\varepsilon^{(n)}_i\big)$, almost all of which are equal to 1.  To say that $\varepsilon^{(n)}$ tends to 0 means that each $\varepsilon^{(n)}_i$ tends to 0 in $n$ but for each $n$ almost all of the $\varepsilon^{(n)}_i$ are equal to 1.  If $y$ is in $\mathds A$, denote by $B_{\varepsilon^{(n)}}(y)$ the set \[B_{\varepsilon^{(n)}}(y) = \prod_iB_{\varepsilon^{(n)}_i}(y_i),\] where $B_{\varepsilon^{(n)}_i}(y_i)$ is the ball \[B_{\varepsilon^{(n)}_i}(y_i) = \big\{z\in \mathds Q_{p_i}\colon |z-y_i| \leq\varepsilon^{(n)}_i\big\}.\]%

Suppose that $h$ is a restricted history and that $\varepsilon^{(n)}$ tends to 0.  There is an $M$ in $\mathds N$ so that $n$ is larger than $M$ implies that for each natural number $i$, $B_{\varepsilon^{(n)}_i}(0)$ is a subset of $\mathds Z_{p_i}$.  Proposition~\ref{three:ExitEqual} and Proposition~\ref{three:ExitCondInequal} together imply that if $n$ is larger than $M$, then \[P^{i}_{x_i}\Big(\bigcap_{1\leq j\leq \ell(h)}X^i_{e(h)_j}\in \mathds Z_{p_i}\Big| X^i_t\in y^i+ B_{\varepsilon^{(n)}_i}(0)\Big) > \e^{-t\alpha_i\sigma_i}.\] Therefore, if $n$ is larger than $M$, then for each positive real $\varepsilon$ there is a natural number $N$ so that
\begin{align}\label{sec4:eqaforthm47}&\prod_{i>N} P^{i}_{x_i}\Big(\bigcap_{1\leq j\leq \ell(h)}X^i_{e(h)_j}\in \mathds Z_{p_i}\Big|X^i_t \in y^i+ B_{\varepsilon^{(n)}_i}(0)\Big)\notag\\&\hspace{2in}> \prod_{i>N} P^{i}_{x_i}\Big(\bigcap_{1\leq j\leq \ell(h)}||X^i||_t \leq 1\Big)\notag\\&\hspace{2in} > \e^{-t\sum_{i>N}\alpha_i\sigma_i} \notag\\&\hspace{2in} > 1-\varepsilon\end{align}
and
\begin{align}\label{sec4:eqbforthm47bb}
\prod_{i>N} P^{i}_{t, x_i, y_i}\Big(\bigcap_{1\leq j\leq \ell(h)}X^i_{{e(h)_j}}\in \mathds Z_{p_i}\Big)& >\prod_{i>N} P^{i}_{t, x_i, y_i}\big(||X^i||_{t}\leq 1\big) \notag\\&> \e^{-t\sum_{i>N}\alpha_i\sigma_i} \notag\\&> 1-\varepsilon, 
\end{align}  
Inequalities \eqref{sec4:eqaforthm47} and \eqref{sec4:eqbforthm47bb} together imply that 
\begin{align}\label{4:eq:conditestforproducts}&\bigg|\prod_{i>N} P^{i}_{x_i}\Big(\bigcap_{1\leq j\leq \ell(h)}X^i_t\in \mathds Z_{p_i}\big|X^i_{e(h)_j} \in y^i+ B_{\varepsilon^{(n)}_i}(0)\Big)\notag\\&\hspace{3in} - \prod_{i>N} P^{i}_{t, x_i, y_i}\Big(\bigcap_{1\leq j\leq \ell(h)}X^i_t\in \mathds Z_{p_i}\Big)\bigg| < \varepsilon.
\end{align}

For any simple cylinder set associated to a restricted history $h$ and for any $x$ and $y$ in $\mathds A$ and positive real $t$, the probability measure $P^{\A}_{t, x, y}$ on adelic paths conditioned to start at $x$ at time 0 and to be at $y$ at time $t$ is given by 
\begin{align}\label{measadelic}P^{\A}_{t, x, y}(C(h)) &= \lim_{\varepsilon^{(n)} \to 0} P^{\A}_{x}\big(C(h)\big|X_t \in y+ B_{\varepsilon^{(n)}}(0)\big)\notag\\%
& =  \lim_{\varepsilon^{(n)} \to 0} \prod_i P^{i}_{x_i}\Big(C(h)_i\Big|X^i_t \in y^i+ B_{\varepsilon^{(n)}_i}(0)\Big)\notag\\%
& =  \lim_{\varepsilon^{(n)} \to 0}\Bigg( \prod_{1\leq i\leq N} P^{i}_{x_i}\Big(C(h)_i\Big|X^i_t \in y^i+ B_{\varepsilon^{(n)}_i}(0)\Big)\notag\\&\qquad\qquad\cdot \prod_{i>N} P^{i}_{x_i}\Big(\bigcap_{1\leq j\leq \ell(h)}X^i_t\in \mathds Z_{p_i}\big|X^i_{e(h)_j} \in y^i+ B_{\varepsilon^{(n)}_i}(0)\Big)\Bigg)\notag\\%
& =  \lim_{\varepsilon^{(n)} \to 0}\prod_{1\leq i\leq N} P^{i}_{x_i}\big(C(h)_i\big|X^i_t \in y^i+ B_{\varepsilon^{(n)}_i}(0)\big)\notag\\&\qquad\qquad\cdot\lim_{\varepsilon^{(n)} \to 0} \prod_{i>N} P^{i}_{x_i}\Big(\bigcap_{1\leq j\leq \ell(h)}X^i_t\in \mathds Z_{p_i}\big|X^i_{e(h)_j} \in y^i+ B_{\varepsilon^{(n)}_i}(0)\Big)\notag\\%
& =  \prod_{1\leq i\leq N} P^{i}_{t, x_i, y_i}\big(C(h)_i\big)\\&\qquad\qquad\cdot\lim_{\varepsilon^{(n)} \to 0} \prod_{i>N} P^{i}_{x_i}\Big(\bigcap_{1\leq j\leq \ell(h)}X^i_t\in \mathds Z_{p_i}\big|X^i_{e(h)_j} \in y^i+ B_{\varepsilon^{(n)}_i}(0)\Big).\notag
\end{align} 
  
 Take the absolute value of the difference between $P^{\A}_{t, x, y}(C(h))$ and $\prod_{i\in\mathds N} P^{i}_{x_i, y_i, t}(C(h)_i)$ and use \eqref{4:eq:conditestforproducts} and \eqref{measadelic} to obtain the estimate
\begin{align*}
&\Big|\prod_{i\in\mathds N} P^{i}_{t, x_i, y_i}(C(h)_i) - P^{\A}_{t,x,y}(C(h)) \Big|\\
& \qquad =  \Big| \prod_{i\in \mathds N} P^{i}_{t, x_i, y_i}\big(C(h)_i\big) - \lim_{\varepsilon^{(n)} \to 0}\prod_{i\in\mathds N} P^{i}_{x_i}\Big(\bigcap_{1\leq j\leq \ell(h)}X^i_t\in \mathds Z_{p_i}\big|X^i_{e(h)_j} \in y^i+ B_{\varepsilon^{(n)}_i}(0)\Big)\Big| 
\\& \qquad =  \Big|\prod_{1\leq i\leq N} P^{i}_{t, x_i, y_i}\big(C(h)_i\big)\Big|\cdot \lim_{\varepsilon^{(n)} \to 0}\Big| \prod_{i>N} P^{i}_{x_i}\Big(\bigcap_{1\leq j\leq \ell(h)}X^i_t\in \mathds Z_{p_i}\big|X^i_{e(h)_j} \in y^i+ B_{\varepsilon^{(n)}_i}(0)\Big) \\&\hspace{4.9in}
 - \prod_{i>N} P^{i}_{t, x_i, y_i}\big(C(h)_i\big)\Big|
\\& \qquad \leq  \Big|\prod_{1\leq i\leq N} P^{i}_{t, x_i, y_i}\big(C(h)_i\big)\Big|\cdot \lim_{\varepsilon^{(n)} \to 0}\varepsilon \leq \varepsilon.
\end{align*}
Since $\varepsilon$ was arbitrarily chosen, \[P^{\A}_{t, x, y}(C(h)) = \prod_{i\in\mathds N} P^{i}_{t, x_i, y_i}(C(h)_i) = \big(\!\otimes_{i}P^{i}_{t, x_i, y_i}\big)(C(h)).\]  The measure $P^{\A}_{t, x, y}$ therefore agrees on the simple cylinder sets of $D(I\colon \A)$ with restricted histories with the measure induced on the adelic paths by the product measure.  Since these sets form a $\pi$-system that generates cylinder sets, the two measures agree on the cylinder sets, \cite{bil1}. %page number?  
\end{proof}

%%%%%%%%%%%%%%%%%%%%%%%%%%%%%%%%%%%%%%%%%%%%%
%%%%%%%%%%%%%%%%%%%%%%%%%%%%%%%%%%%%%%%%%%%%%
%%%%%%%%%%%%%%%%%%%%%%%%%%%%%%%%%%%%%%%%%%%%%
%%%%%%%%%%%%%%%%%%%%%%%%%%%%%%%%%%%%%%%%%%%%%
%%%%%%%%%%%%%%%%%%%%%%%%%%%%%%%%%%%%%%%%%%%%%

\section{Adelic Path Integrals and Propagators}\label{five}

Suppose throughout this section that $v$ is a real valued, non-negative, bounded, continuous function on $\A$ and refer to such a function as a \emph{potential function}.  If $\psi$ is a Schwartz-Bruhat function, then define for each $a$ in $\A$ the function $V\psi$ by \[(V\psi)(a) = v(a)\psi(a).\]  The operator $V$ maps a Schwartz-Bruhat function $\psi$ to the function $V\psi$ and is a \emph{potential} associated to the potential function $v$.  Retain throughout this section the notation where a function denoted by a lowercase letter corresponds to a multiplication operator that is denoted by the corresponding uppercase letter.  Denote respectively by $H_{\A}^0$ and $H_{\A}$ the \emph{free Schr\"{o}dinger operator} and \emph{Schr\"{o}dinger operator with potential} $V$ acting on $SB(\A)$ and given by \[H_{\A}^0 = \Delta_{\A} \quad {\rm and}\quad H_{\A} = \Delta_{\A}+ V.\] The operator $H_{\A}$ is essentially self adjoint on the Schwartz-Bruhat functions as the sum of an essentially self adjoint operator on this domain and a bounded real valued multiplication operator.  To compress notation, henceforth denote respectively by $D_\A[0, \infty)$ and, for all positive $t$, by $D_\A[0,t]$ the path spaces $D([0,\infty): \A)$ and $D([0,t]: \A)$. Denote by $\pi_t^0$ and by $\pi_t$ the operators on $L^2(\A)$ that act on any Schwartz-Bruhat function $\alpha$ by \begin{equation}\label{semigroup:defs}\big(\pi_t^0\alpha\big)(x) = \int_{D_\A[0,\infty)}\alpha(\omega(t))\,{\rm d}P^\A_x\hspace{.125in} {\rm and}\hspace{.125in} (\pi_t\alpha)(x) = \int_{D_\A[0,\infty)} \e^{-\int_0^t v(\omega(s))\,{\rm d}s} \alpha(\omega(t))\,{\rm d}P^\A_x.\end{equation} Notice that a sample path $\omega$ has at most countably many discontinuities as a c\'{a}dl\'{a}g function and $v$ is bounded and continuous, implying that $v\circ \omega$ is a bounded function on $[0,\infty)$ with at most countably many discontinuities.  The integral appearing in the exponential of the right hand side of the equality \eqref{semigroup:defs} is, therefore, well defined as a Riemann integral.

\begin{lemma}\label{5:FK:a}
For each positive real number $t$, $\pi_t$ extends to a bounded self adjoint operator defined on all of $L^2(\A)$.
\end{lemma}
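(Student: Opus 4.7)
The plan is to express $\pi_t$ as an integral operator on $\A\times \A$ with a symmetric kernel and then invoke Schur's test. Conditioning $P^\A_x$ on the endpoint $X_t = y$, with the bridge measures $P^\A_{t,x,y}$ supplied by Theorem~\ref{6:thm:7}, one obtains
\[(\pi_t\alpha)(x) = \int_\A k_t(x,y)\alpha(y)\,d\mu_\A(y),\]
where
\[k_t(x,y) = \rho(t, y-x)\int_{D_\A[0,\infty)}\e^{-\int_0^t v(\omega(s))\,ds}\,dP^\A_{t,x,y}(\omega).\]
The identity holds first on simple cylinder sets, via the Markov disintegration of $P^\A_x$ through the endpoint $X_t$, and then extends to Schwartz-Bruhat $\alpha$ by standard measure-theoretic arguments.

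First I would establish the symmetry $k_t(x,y) = k_t(y,x)$. By Theorem~\ref{6:thm:pre7} the density $\rho(t,\cdot)$ is the inverse adelic Fourier transform of the positive even function $\exp(-t\sum_i\sigma_i|\cdot|_i^b)$, hence $\rho(t, y-x) = \rho(t, x-y)$. The bridge integral enjoys a time-reversal symmetry: the involution $\omega\mapsto \omega(t-\cdot)$ on paths over $[0,t]$ pushes $P^\A_{t,x,y}$ forward to $P^\A_{t,y,x}$, and since $v$ depends only on spatial position, the Feynman-Kac factor $\int_0^t v(\omega(s))\,ds$ is preserved.

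Next I would bound $\pi_t$ on $L^2(\A)$. Non-negativity of $v$ gives $0\leq \e^{-\int_0^t v(\omega(s))\,ds}\leq 1$, hence $0 \leq k_t(x,y) \leq \rho(t, y-x)$ pointwise. Since $\rho(t,\cdot)$ is a probability density by Theorem~\ref{6:thm:pre7}, both $\int_\A k_t(x,y)\,d\mu_\A(y)$ and $\int_\A k_t(x,y)\,d\mu_\A(x)$ are bounded by 1. Schur's test then yields $\|\pi_t\|\leq 1$, so $\pi_t$ extends by continuity from $SB(\A)$ to a bounded operator on $L^2(\A)$. Symmetry of $k_t$ combined with Fubini gives self-adjointness on $SB(\A)$, which descends to the bounded extension.

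The main obstacle is establishing the time-reversal symmetry of the adelic bridge measure. Using Theorem~\ref{6:thm:7}, which identifies $P^\A_{t,x,y}$ with $\otimes_i P^i_{t,x_i,y_i}$ on $D(I\colon \A)$, this reduces to the corresponding statement at each $p_i$-adic factor. At the $p_i$-adic level the identity follows by direct computation on the finite-dimensional distributions of $P^i_{t,x_i,y_i}$, using the radial symmetry $\rho^i(s, z) = \rho^i(s, -z)$ inherited from the Vladimirov symbol $|\cdot|_i^b$.
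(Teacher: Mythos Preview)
Your proposal is correct and follows essentially the same route as the paper: represent $\pi_t$ as an integral operator by conditioning on the endpoint, then establish symmetry of the kernel via time reversal of the bridge measure together with the evenness of $\rho(t,\cdot)$; your explicit use of Schur's test for the operator bound is in fact cleaner than the paper's somewhat terse appeal to boundedness of the kernel. One technical point the paper handles that you should make precise: the naive reversal $\omega\mapsto\omega(t-\cdot)$ sends a c\`adl\`ag path to a c\`agl\`ad one, so the paper defines the reflection with a left-limit modification $\omega^\ast(s)=\omega((t-s)-0)$ in order to remain in the Skorokhod space---your plan to verify the push-forward identity on finite-dimensional distributions makes this harmless in the end, but the involution itself needs that adjustment.
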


\begin{proof}
Fix $t$ larger than 0 and suppose that $\alpha$ is in $L^2(\A)$.  Condition on the event that a path is at $y$ at time $t$ to obtain the equalities
\begin{align*}
(\pi_t\alpha)(x) &= \int_{D_\A[0,\infty)} \e^{-\int_0^t v(\omega(s))\,{\rm d}s} \alpha(\omega(t))\,{\rm d}P^\A_x\\
&=\int_{\A}\bigg\{\int_{D_\A[0,\infty)} \e^{-\int_0^t v(\omega(s))\,{\rm d}s} \alpha(\omega(t))\,{\rm d}P^\A_{t,x,y}\bigg\}\rho_t(x-y)\,{\rm d}{y}\\
&=\int_{\A}\bigg\{\int_{D_\A[0,\infty)} \e^{-\int_0^t v(\omega(s))\,{\rm d}s} \alpha(y)\,{\rm d}P^\A_{t,x,y}\bigg\}\rho(t,x-y)\,{\rm d}{y}\\
&=\int_{\A}\bigg\{\int_{D_\A[0,\infty)} \e^{-\int_0^t v(\omega(s))\,{\rm d}s}\,{\rm d}P^\A_{t,x,y}\bigg\}\rho(t,x-y)\alpha(y)\,{\rm d}{y} = \int_\A K_t(x,y)\alpha(y)\,{\rm d}{y}\\
\end{align*}
where \[K_t(x,y) = \bigg\{\int_{D_\A[0,\infty)} \e^{-\int_0^t v(\omega(s))\,{\rm d}s}\,{\rm d}P^\A_{t,x,y}\bigg\}\rho(t,x-y).\]  The operator $\pi_t$ is an integral operator with kernel $K_t$.  The kernel $K_t$ depends only on the values that a path takes on the interval $[0,t]$ and so \begin{equation}\label{sec5:Ktgoesfinite} K_t(x,y) = \bigg\{\int_{D_\A[0,t]} \e^{-\int_0^t v(\omega(s))\,{\rm d}s}\,{\rm d}P^\A_{t,x,y}\bigg\}\rho(t,x-y).\end{equation} Since $K_t$ is a bounded real valued function of $(x, y)$, in order to show that $\pi_t$ is bounded and self adjoint, it suffices to show that $K_t$ is symmetric.  Define time reflection, $\ast$, to act on the probability space $(D_\A[0,t], P_{t,x,y})$ in the following way.  For each $\omega$ in $D_\A[0,t]$, define $\omega^\ast$ at a time point $s$ in $[0,t)$ by \[\omega^\ast(s) = \omega(t - s - 0)\quad {\rm and}\quad \omega^\ast(t) = \omega(0).\]  For each measurable subset $E$ of $D_\A[0,t]$, define \[Q^\A_{t,x,y}(E) = P^\A_{t, x, y}(E^\ast).\] Time reflection is an involution mapping the $\sigma$-algebra of cylinder sets to itself.  For any fixed $s$ in $[0,t]$, Skorokhod paths are left continuous at $s$ with full probability in the probability space $\big(D_\A[0,t], P^\A_{t,x,y}\big)$, implying that $Q^\A_{t,x,y}$ and $P^\A_{t, y, x}$ agree on the cylinder sets of finite type and therefore on all cylinder sets since the cylinder sets of finite type form a $\pi$-system that generates the $\sigma$-algebra of cylinder sets.  The integral of the exponential term is invariant under time reversal and $\rho(t,x-y)$ equals $\rho(t,y-x)$, implying that
\begin{align*}
K_t(x,y) &= \bigg\{\int_{D_\A[0,t]} \e^{-\int_0^t v(\omega^\ast(s))\,{\rm d}s}\,{\rm d}P^\A_{t,x,y}(\omega^\ast)\bigg\}\rho(t,x-y)\\
&= \bigg\{\int_{D_\A[0,t]} \e^{-\int_0^t v(\omega(s))\,{\rm d}s}\,{\rm d}P^\A_{t,x,y}(\omega^\ast)\bigg\}\rho(t,x-y)\\%
&=\bigg\{\int_{D_\A[0,t]} \e^{-\int_0^t v(\omega(s))\,{\rm d}s}\,{\rm d}Q^\A_{t,x,y}(\omega)\bigg\}\rho(t,x-y)\\
&=\bigg\{\int_{D_\A[0,t]} \e^{-\int_0^t v(\omega(s))\,{\rm d}s}\,{\rm d}P^\A_{t,y,x}(\omega)\bigg\}\rho(t,y-x) = K_t(y,x).
\end{align*}

\end{proof}

\begin{lemma}\label{5:FK:b}
The one parameter family of operators $(\pi_t)_{t> 0}$ forms a strongly continuous semigroup.
\end{lemma}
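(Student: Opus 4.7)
The plan is to verify the two defining properties of a strongly continuous semigroup: (i) the semigroup identity $\pi_{s+t}=\pi_s\pi_t$ on $L^2(\A)$ for all $s,t>0$, and (ii) strong continuity at $0$, i.e.\ $\|\pi_t\alpha-\alpha\|_{\A}\to 0$ as $t\to 0^+$ for every $\alpha\in L^2(\A)$. A preliminary observation is that each $\pi_t$ is a contraction: since $v\ge 0$, the weight $\e^{-\int_0^t v(\omega(s))\,{\rm d}s}$ is bounded by $1$, so Jensen's inequality together with the transition density representation $\int\alpha(\omega(t))\,{\rm d}P^\A_x = \int_\A\rho(t,y-x)\alpha(y)\,{\rm d}y$ (from Theorem~\ref{6:thm:pre7}) and Fubini with $\int_\A\rho(t,y-x)\,{\rm d}x=1$ yield $\|\pi_t\alpha\|_\A\le\|\alpha\|_\A$.

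For the semigroup identity, I would invoke the Markov property of $P^\A_x$ with respect to the filtration generated by evaluation. This Markov property is inherited directly from the product structure in Theorem~\ref{6:thm:pre7} together with the convolution identity $\rho(s+t,\cdot)=\rho(s,\cdot)*\rho(t,\cdot)$, which is immediate from the Fourier-side expression in \eqref{AdelicHeatFundSolution}. Using the additive split $\int_0^{s+t}v(\omega(u))\,{\rm d}u=\int_0^s v(\omega(u))\,{\rm d}u+\int_0^t v(\omega(u+s))\,{\rm d}u$, conditioning on the state $\omega(s)=y$, and applying the Markov property to the shifted path gives
\[
(\pi_{s+t}\alpha)(x)=\int_\A\bigg\{\int_{D_\A[0,\infty)}\e^{-\int_0^s v(\omega(u))\,{\rm d}u}(\pi_t\alpha)(\omega(s))\,{\rm d}P^\A_x\bigg\},
\]
which is $(\pi_s(\pi_t\alpha))(x)$. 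To make this rigorous, I would verify it on cylinder sets with restricted histories (on which the product structure of Theorem~\ref{6:thm:pre7} gives the Markov property componentwise from the $p$-adic case) and then extend by the $\pi$-system argument used at the end of the proof of Theorem~\ref{6:thm:pre7}.

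For strong continuity, I would exploit the uniform contraction bound and the density of $SB(\A)$ in $L^2(\A)$ to reduce to showing $\pi_t\alpha\to\alpha$ in $L^2$ for $\alpha\in SB(\A)$. Writing
\[
(\pi_t\alpha)(x)-\alpha(x)=\int_{D_\A[0,\infty)}\!\bigl(\e^{-\int_0^t v(\omega(s))\,{\rm d}s}-1\bigr)\alpha(\omega(t))\,{\rm d}P^\A_x+\int_\A\bigl[\alpha(y)-\alpha(x)\bigr]\rho(t,y-x)\,{\rm d}y,
\]
the first summand is dominated in sup norm by $t\|v\|_\infty\|\alpha\|_\infty$ and so tends to $0$ uniformly. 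The second summand tends to $0$ pointwise by the weak-$\ast$ convergence $\rho(t,\cdot)\to\delta_0$ established in Theorem~\ref{6:thm:pre7}, and is uniformly bounded by $2\|\alpha\|_\infty$; since any $\alpha\in SB(\A)$ is supported in a compact open subset of $\A$ with an equally compact open neighborhood of possible translates over small $t$, dominated convergence upgrades the pointwise convergence to $L^2$-convergence.

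The main obstacle is the rigorous deployment of the Markov property in the adelic setting, since the process $Y$ lives on a countable product of Skorokhod spaces rather than on a single path space. I expect this to be handled cleanly by reducing to restricted cylinder sets, invoking componentwise the $p$-adic Markov property (implicit in the construction of $P^i$ from $\rho^i$ via formula \eqref{meas}), and then using the $\pi$-system uniqueness argument already employed in Theorems~\ref{6:thm:pre7} and~\ref{6:thm:7} to extend to all measurable functionals.
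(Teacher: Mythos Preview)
Your overall architecture matches the paper's: establish the semigroup identity via the Markov property (the paper carries this out by a direct computation that conditions on the position at time $s$ and uses the shifted path $\omega_s$), then prove strong continuity first on $SB(\A)$ and extend to $L^2(\A)$ by density together with the uniform contraction bound.

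There is, however, a genuine gap in your strong-continuity argument. The adeles carry an infinite Haar measure, so sup-norm control does not by itself yield $L^2$-convergence. Concretely, your bound $t\|v\|_\infty\|\alpha\|_\infty$ on the first summand is a constant function of $x$, hence not in $L^2(\A)$, so ``tends to $0$ uniformly'' does not give $\|\cdot\|_\A\to 0$. For the second summand your claim that it is supported in a fixed compact set is false: $\rho(t,\cdot)$ has full support on $\A$ for every $t>0$, so $\rho(t,\cdot)\ast\alpha$ is nowhere compactly supported and no ``compact neighborhood of possible translates'' exists. Dominated convergence against a constant on an infinite-measure space is not available.

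Both pieces are easily repaired, and the repairs bring you in line with what the paper actually does. For the first summand, replace the sup-norm bound by the pointwise estimate
\[
\Bigl|\int_{D_\A[0,\infty)}\bigl(\e^{-\int_0^t v(\omega(s))\,{\rm d}s}-1\bigr)\alpha(\omega(t))\,{\rm d}P^\A_x\Bigr|\;\le\; t\,\|v\|_\infty\,\bigl(\rho(t,\cdot)\ast|\alpha|\bigr)(x),
\]
whose $L^2$-norm is at most $t\|v\|_\infty\|\alpha\|_\A$ by Young's convolution inequality; this is precisely the paper's ``$O(t)\|f_t\ast\alpha\|_2$'' term. For the second summand $\pi^0_t\alpha-\alpha$, argue on the Fourier side: by \eqref{AdelicHeatFundSolution} one has $\FA(\pi^0_t\alpha-\alpha)=(\e^{-t\mathcal M}-1)\FA\alpha$, and since $|\e^{-t\mathcal M}-1|\le 2$ while $|\FA\alpha|^2\in L^1(\A)$, dominated convergence (now legitimate, against an integrable majorant) gives $\|\pi^0_t\alpha-\alpha\|_\A\to 0$. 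The paper asserts this last convergence without elaboration, implicitly relying on exactly this Fourier-side argument rather than on any compact-support claim.
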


\begin{proof}
Suppose that $\omega$ is in $D_\A[0,\infty)$ and that $\alpha$ is in $SB(\A)$.  For each positive $s$, denote by $\omega_s$ the function that maps $t$ to $\omega(s+t)$.  The law of total probability implies that 
\begin{align*}
\big(\pi_{s+t}\alpha\big)(x) &= \int_{D_\A[0,\infty)}\e^{-\int_0^{t+s}v(\omega(u))\,{\rm d}u}\alpha(\omega(t+s))\,{\rm d}P^\A_x(\omega)\\
&= \int_{D_\A[0,\infty)}\e^{-\int_0^{s}v(\omega(u))\,{\rm d}u}\e^{-\int_s^{t+s}v(\omega(r))\,{\rm d}r}\alpha(\omega(t+s))\,{\rm d}P^\A_x(\omega)\\
&= \int_\A\bigg\{\int_{D_\A[0,\infty)} \e^{-\int_0^{s}v(\omega(u))\,{\rm d}u}\e^{-\int_s^{t+s}v(\omega(r))\,{\rm d}r}\alpha(\omega_s(t))\,{\rm d}P^\A_{x,y,s}(\omega)\bigg\}\rho(s,y-x)\,{\rm d}y\\
& = \int_\A \bigg\{\int_{D_\A[0,\infty)}\e^{-\int_0^s v(\omega(u))\,{\rm d}u}\e^{-\int_0^tv(\omega_s(r))\,{\rm d}r}\alpha(\omega_s(t))\,{\rm d}P^\A_{x,y,s}(\omega)\bigg\}\rho(s,y-x){\rm d}y\\
& = \int_\A \bigg\{\int_{D_\A[0,\infty)}\e^{-\int_0^s v(\nu(u))\,{\rm d}u}\\&\qquad\qquad\bigg\{\int_{D_\A [0,\infty)}\e^{-\int_0^tv(\omega(r)+y)\,{\rm d}r}\alpha(\omega(t) +y)\,{\rm d}P^\A(\omega)\bigg\}\,{\rm d}P^\A_{x,y,s}(\nu)\bigg\}\rho(s,y-x){\rm d}y\\
& = \int_\A \bigg\{\int_{D_\A[0,\infty)}\e^{-\int_0^s v(\nu(u))\,{\rm d}u}\\&\qquad\qquad\bigg\{\int_{D_\A[0,\infty)}\e^{-\int_0^tv(\omega(r))\,{\rm d}r}\alpha(\omega(t))\,{\rm d}P^\A_{\nu(s)}(\omega)\bigg\}\,{\rm d}P^\A_{x,y,s}(\nu)\bigg\}\rho(s,y-x)\,{\rm d}y\\
& = \int_{D_\A[0,\infty)} \e^{-\int_0^s v(\nu(u))\,{\rm d}u} \bigg\{\int_{D_\A[0,\infty)}\e^{-\int_0^{t}v(\omega(r))\,{\rm d}r}\alpha(\omega(t))\,{\rm d}P^\A_{\nu(s)}(\omega)\bigg\}\,{\rm d}P^\A_x(\nu)\\ &= \big(\pi_s\pi_t\alpha\big)\!(x),
\end{align*}
hence $\big(\pi_{t}\big)_{t>0}$ is a semigroup.

Utilize below the ``big O'' notation below for the $t$ dependent error term below that is uniform in $x$.  If $\alpha$ is $SB(\A)$, then
\begin{align*}
(\pi_t\alpha - \alpha)(x) &= \int_{D_\A[0,\infty)}\e^{-\int_0^tv(\omega(s))\,{\rm d}s}\alpha(\omega(t))\,{\rm d}P^\A_x - \alpha(x)\\
&= \int_{D_\A[0,\infty)}(1 - O(t))\alpha(\omega(t))\,{\rm d}P^\A_x - \alpha(x)\\
&= \int_{D_\A[0,\infty)}\alpha(\omega(t))\,{\rm d}P^\A_x - \alpha(x) - \int_{D_\A[0,\infty)}O(t)\alpha(\omega(t))\,{\rm d}P^\A_x - \alpha(x)\\
&=  (\pi^0_t\alpha)(x) - \alpha(x) - O(t)(f_t\ast \alpha)(x).
\end{align*}
Take the norm of the difference $\pi_t\alpha - \alpha$ to obtain the bound \[||\pi_t\alpha - \alpha||_2 \leq ||\pi_t^0\alpha - \alpha||_2 + O(t)||f_t\ast \alpha||_2 \to 0\] as $t$ tends to 0 from the right, which implies that \begin{equation}\label{sec4:almostsemigroup}\lim_{t\to 0^+}||\pi_t\alpha - \alpha||_2 = 0.\end{equation}

For each positive $t$, Young's convolution inequality implies that the operator $\pi_t$ is bounded on $SB(\A)$ by 1 and so uniquely extends to an operator, again denoted by $\pi_t$, on all of $L^2(\A)$.  Furthermore, if $\varepsilon$ is a positive real number and $\beta$ is in $L^2(\A)$, then there is an $\alpha$ in $SB(\A)$ such that \[||\beta - \alpha||_2 < \varepsilon,\] and so %
\begin{align*}
||\pi_t\beta - \beta||_2 &= ||\pi_t(\beta - \alpha + \alpha) - (\beta - \alpha +\alpha)||_2\\
&\leq ||\pi_t(\beta - \alpha)||_2 + ||\beta - \alpha||_2 + ||\pi_t\alpha  -\alpha||_2
\\&\leq 2||\beta - \alpha||_2 + ||\pi_t\alpha  -\alpha||_2 \to 2||\beta - \alpha||_2 <2\varepsilon
\end{align*}
as $t$ tends to 0 from the right.  Since $\varepsilon$ was arbitrarily chosen, $\pi_t$ is strongly continuous on $L^2(\A)$.  A similar argument using approximation of square integrable functions by Schwartz-Bruhat function shows that $(\pi_t)_{t> 0}$ is a semi-group on $L^2(\A)$.  In particular, reuse the notation for $\alpha$ and $\beta$ to obtain for any positive real numbers $t$ and $s$ the estimate
\begin{align*}
||\pi_{t+s}\beta - \pi_t\pi_s\beta||_2 &= ||\pi_{t+s}\alpha -\pi_t\pi_s\alpha + \pi_{t+s}(\beta - \alpha) - \pi_t\pi_s(\beta - \alpha)||_2\\
&\leq ||\pi_{t+s}\alpha -\pi_t\pi_s\alpha||_2 + ||\pi_{t+s}(\beta - \alpha)||_2 + ||\pi_t\pi_s(\beta - \alpha)||_2\\
&= ||\pi_{t+s}(\beta - \alpha)||_2 + ||\pi_t\pi_s(\beta - \alpha)||_2 <  2||\beta - \alpha||_2 <\varepsilon.
\end{align*}

\end{proof}

\begin{lemma}\label{5:FK:c}
The infinitesimal generator for $(\pi_t)_{t> 0}$ is $-H_\A$.
\end{lemma}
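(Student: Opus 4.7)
The plan is to show that for every $\alpha$ in $SB(\A)$ one has
\[\lim_{t\to 0^+} \frac{\pi_t\alpha - \alpha}{t} = -(\Delta_\A + V)\alpha\]
in the $L^2(\A)$ norm. Since $SB(\A)$ is a core for $H_\A$ (by essential self-adjointness on this domain and the density of $SB(\A)$ in $L^2(\A)$), this identification will complete the proof.

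The key split is to compare $\pi_t$ with the free semigroup $\pi_t^0$. First, I would note that by Theorem~\ref{6:thm:pre7}, $(\pi_t^0 \alpha)(x) = (\rho(t,\cdot) \ast \alpha)(x)$, and the Fourier multiplier representation $\mathcal F_\A \rho(t,\cdot) = \e^{-t\sum_i \sigma_i|\cdot|_i^b}$ together with unitarity of $\FA$ identify $\pi_t^0$ with the spectral semigroup $\e^{-t\Delta_\A}$. Consequently, for $\alpha$ in the domain of $\Delta_\A$, which contains $SB(\A)$ by Proposition~\ref{five:prop:pretheorem},
\[\lim_{t\to 0^+}\frac{\pi_t^0\alpha - \alpha}{t} = -\Delta_\A\alpha\]
in $L^2(\A)$.

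Next, I would estimate the difference $\pi_t - \pi_t^0$ using the boundedness of $v$. Expanding the exponential as $\e^{-u} = 1 - u + O(u^2)$ with $u = \int_0^t v(\omega(s))\,{\rm d}s$, whose absolute value is at most $t\|v\|_\infty$, gives
\[(\pi_t\alpha)(x) - (\pi_t^0\alpha)(x) = -\int_{D_\A[0,\infty)}\Big(\int_0^t v(\omega(s))\,{\rm d}s\Big)\alpha(\omega(t))\,{\rm d}P^\A_x + R(t,x),\]
where the remainder satisfies $|R(t,x)| \leq \tfrac{1}{2}t^2 \|v\|_\infty^2 \e^{t\|v\|_\infty}\|\alpha\|_\infty$ uniformly in $x$; when $\alpha$ has compact support, this remainder is $o(t)$ in $L^2$. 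Applying Fubini to the principal term and dividing by $t$ reduces matters to computing
\[\lim_{t\to 0^+} \frac{1}{t}\int_0^t \Big(\int_{D_\A[0,\infty)} v(\omega(s))\alpha(\omega(t))\,{\rm d}P^\A_x(\omega)\Big)\,{\rm d}s,\]
and I would argue by dominated convergence together with the sample path continuity at $0$ (every c\`adl\`ag path $\omega$ with $\omega(0)=x$ satisfies $\omega(s) \to x$ as $s \to 0^+$) that the inner integral tends to $v(x)\alpha(x)$ as $s \to 0^+$, uniformly enough that the Ces\`aro average has the same limit. The continuity of $v$ on $\A$, the boundedness of $v$ and $\alpha$, and the fact that $\rho(t,\cdot)$ is an approximate identity (established in Theorem~\ref{6:thm:pre7}) supply the required uniformity in the $L^2$ sense.

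Combining the three pieces, I would conclude
\[\lim_{t\to 0^+}\frac{\pi_t\alpha-\alpha}{t} = -\Delta_\A\alpha - V\alpha = -H_\A\alpha\]
in $L^2(\A)$, which identifies the generator. The main obstacle is the careful justification of the potential contribution: showing that the path average of $v(\omega(s))\alpha(\omega(t))$ really does localize at $v(x)\alpha(x)$ in $L^2$ rather than merely pointwise. I would handle this by using the approximate identity property of $\rho(t,\cdot)$ to reduce to a convolution argument, exploiting that $v$ is bounded and continuous and that $\alpha$ is compactly supported as a Schwartz-Bruhat function, so that $V\pi_{t-s}^0\alpha \to V\alpha$ uniformly on the support of $\alpha$ as $t-s \to 0^+$; the Markov property then identifies the integrand with $(\pi_s^0 V \pi_{t-s}^0 \alpha)(x)$, and strong continuity of $\pi_\cdot^0$ together with the $s \to 0^+$ limit gives the required convergence.
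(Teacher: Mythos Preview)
Your proposal is correct and follows a genuinely different route from the paper for the potential term. Both arguments agree on the free part: identify $\pi_t^0$ with $\e^{-t\Delta_\A}$ via the Fourier multiplier and conclude $\frac{1}{t}(\pi_t^0\alpha-\alpha)\to -\Delta_\A\alpha$ for $\alpha\in SB(\A)$. Both also expand the exponential to first order in $\int_0^t v(\omega(s))\,{\rm d}s$. The divergence is in handling the principal potential term. The paper invokes the exit-time estimate (Proposition~\ref{three:ExitEqual}) together with the local constancy of Schwartz--Bruhat functions to show that, with probability tending to~$1$, the path stays in a basic open set on which $v$ and $\alpha$ are \emph{constant}; this makes the limit $\frac{1}{t}\int_0^t v(\omega(s))\,{\rm d}s\to v(x)$ pointwise trivially on that set. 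Your approach instead uses the Markov property to rewrite $\int v(\omega(s))\alpha(\omega(t))\,{\rm d}P^\A_x=(\pi_s^0 V\pi_{t-s}^0\alpha)(x)$ and then appeals to strong continuity of $(\pi_r^0)$ and boundedness of $V$ to get $\sup_{0\le s\le t}\|\pi_s^0 V\pi_{t-s}^0\alpha-V\alpha\|_2\to 0$, which forces the Ces\`aro average to converge in $L^2$. Your route is more robust: it uses only that $v$ is bounded and continuous, which matches the stated hypotheses, whereas the paper's argument tacitly treats $v$ as locally constant. The paper's route, on the other hand, is more hands-on and exploits the specific adelic exit estimates developed in Section~\ref{three}.

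Two small clean-ups are worth making. First, your uniform bound $|R(t,x)|\le C t^2\|\alpha\|_\infty$ does not by itself give $o(t)$ in $L^2(\A)$, since $\A$ has infinite measure; use instead $|R(t,x)|\le \tfrac12 t^2\|v\|_\infty^2\,(\rho(t,\cdot)\ast|\alpha|)(x)$ and Young's inequality to get $\|R(t,\cdot)\|_2\le \tfrac12 t^2\|v\|_\infty^2\|\alpha\|_2$. Second, the initial appeal to ``sample-path continuity at~$0$'' and dominated convergence is only a heuristic for pointwise convergence and does not directly yield the $L^2$ statement; the Markov-property argument you give at the end is the actual engine, so you can drop the earlier hedge and lead with it.
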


\begin{proof}
Suppose that $\alpha$ is in $SB(\A)$.  The law of total probability implies that 
\begin{align*}
(\pi^0_t\alpha)(x) &= \int_{D_\A[0,\infty)}\alpha(\omega(t))\,{\rm d}P^\A_x(\omega)\\
&= \int_{\A}\bigg\{\int_{D_\A[0,\infty)}\alpha(\omega(t))\,{\rm d}P^\A_{t,x,y}(\omega)\bigg\}\rho(t,x-y)\,{\rm d}y\\
&= \int_{\A}\alpha(y)\rho(t,x-y)\,{\rm d}y = (\rho(t,\cdot)\ast \alpha)(x).
\end{align*}

Utilize the ``little o'' notation.  There is a function $R_t$ on $\A$ such that $|R_t(\cdot)|$ equals $o(t)$ uniformly in the variable in $\A$ and
\begin{align*}
\FA(\rho(t, \cdot)\ast \alpha) & = \big(\e^{-t\mathcal M}\big)\FA(\alpha)\\
& = (1 - t\mathcal M + R_t)\FA(\alpha)\\
& = \FA(\alpha)  - t(\mathcal M\FA)(\alpha)  + (R_t\FA)(\alpha).\\
\end{align*}%
Take the inverse Fourier transform to obtain the equality %
\begin{align*}
\rho(t, \cdot)\ast \alpha & = \alpha - t\Delta_\A\alpha + (\FA^{-1}R_t)\ast \alpha,
\end{align*}
where \begin{equation}\label{sec5:o1}\big|\big|\big(\FA^{-1}R_t\big)\ast \alpha\big|\big| = o(t)||\alpha||.\end{equation}%
There is a function $E(t)$ on $\A$ such that \begin{equation}\label{sec5:o2}|E(t)|  = o(t)\end{equation} holds uniformly in the starting point $x$ of the path $\omega$ and \begin{equation}\label{sec5:o3}\e^{-\int_0^tv(\omega(s))\,{\rm d} s}= 1 - \int_0^tv(\omega(s))\,{\rm d}s + E(t).\end{equation}

Equation \eqref{sec5:o3} implies that
\begin{align}\label{sec5:o4}
(\pi_t\alpha)(x) &= \int_{D_\A[0,\infty)} \e^{-\int_0^tv(\omega(s))\,{\rm d}s}\,\alpha(\omega(t))\,{\rm d}P^\A_x\notag\\
&= \int_{D_\A[0,\infty)} \bigg(1 - \int_0^tv(\omega(s))\,{\rm d}s + E(t)\bigg) \, \alpha(\omega(t))\,{\rm d}P^\A_x\notag\\
&= \int_{D_\A[0,\infty)} \alpha(\omega(t))\,{\rm d}P^\A_x - \int_{D_\A[0,\infty)} \bigg(\int_0^tv(\omega(s))\,{\rm d}s\bigg)\alpha(\omega(t))\,{\rm d}P^\A_x  \notag\\& \qquad\qquad\qquad\qquad\qquad\qquad\qquad\qquad\qquad\qquad+ \int_{D_\A[0,\infty)} E(t)\, \alpha(\omega(t))\,{\rm d}P^\A_x.
\end{align}
Expand the following difference quotient using \eqref{sec5:o4} to obtain the equalities
\begin{align*}
\dfrac{(\pi_t\alpha)(x) - \alpha(x)}{t} &= \frac{1}{t}\bigg\{\int_{D_\A[0,\infty)} \alpha(\omega(t))\,{\rm d}P^\A_x \\&\qquad- \int_{D_\A[0,\infty)} \bigg(\int_0^tv(\omega(s))\,{\rm d}s\bigg)\alpha(\omega(t))\,{\rm d}P^\A_x  + \int_{D_\A[0,\infty)} E(t) \alpha(\omega(t))\,{\rm d}P^\A_x\bigg\}\\
&= \frac{1}{t}\bigg\{-t(\Delta_\A\alpha)(x) + \big(\big(\FA^{-1} R(t)\big)\ast \alpha\big)\!(x) \phantom{\int_{D_\A[0,\infty)}}\\&\qquad- \int_{D_\A[0,\infty)} \bigg(\int_0^tv(\omega(s))\,{\rm d}s\bigg)\alpha(\omega(t))\,{\rm d}P^\A_x  + \int_{D_\A[0,\infty)} E(t) \alpha(\omega(t))\,{\rm d}P^\A_x\bigg\}.
\end{align*}

Given \eqref{sec5:o1} and \eqref{sec5:o2}, to show that the restriction of the infinitesimal generator of $\pi_t$ to the adelic Schwartz-Bruhat functions acts as $-H_\A$ on this space, it suffices to show that \begin{equation}\label{potential-limit-estimate}\frac{1}{t}\int_{D_\A[0,\infty)}\bigg(\int_0^tv(\omega(s))\,{\rm d}s\bigg) \alpha(\omega(t))\,{\rm d}P^\A_x \to v(x)\alpha(x)\end{equation} as $t$ tends to 0 from the right.

Since both $\alpha$ and $v$ are Schwartz-Bruhat functions on $\A$, there are natural numbers $N$ and $n$ so that on the set $U$ with \[U = \prod_{i = 1}^N{B_{p_i^{-n}}(x_i)}\times \prod_{i=N+1}^\infty \mathds Z_{p_i},\] the functions $\alpha$ and $v$ are constant and respectively equal to $\alpha(x)$ and $v(x)$.  Denote by $B_t$ the set of all $\omega$ in $D([0,\infty)\colon \A)$ that remain in $U$ for all $s$ in $[0,t]$.  Since $\sum_{i = 1}^\infty \sigma_i\alpha_i$ is finite, Proposition~\ref{three:ExitEqual} implies that %
\begin{align*}
\lim_{t\to 0^+} P(B_t) &= \lim_{t\to 0^+} \prod_{i=1}^N{\rm e}^{-\sigma_i\alpha_i tp_i^{nb}}\prod_{i = N+1}^\infty {\rm e}^{-\sigma_i\alpha_i t} \\&\ge  \lim_{t\to 0^+} \prod_{i=1}^N{\rm e}^{-\sigma_i\alpha_i tp_i^{nb}} {\rm e}^{- t\sum_{i = 1}^\infty \sigma_i\alpha_i} \to 1
\end{align*}
as $t$ tends to 0 from the right. Consequently, \begin{equation}\label{little-set-near-x}\lim_{t\to 0^+} P(B_t) = 1 \quad {\rm and}\quad \lim_{t\to 0^+} P(D_\A[0,\infty)\setminus B_t) = 0.\end{equation}

To calculate the limit \eqref{potential-limit-estimate}, write the integral on the left hand side of  \eqref{potential-limit-estimate} as a sum of integrals over a disjoint partition of $D_\A[0,\infty)$ given by
\begin{align}\label{DJ:Sum}
&\frac{1}{t}\int_{D_\A[0,\infty)} \bigg(\int_0^t v(\omega(s))\,{\rm d}s\bigg) \alpha(\omega(t))\,{\rm d}P^\A_x(\omega) \notag\\&\qquad= \int_{D_\A[0,\infty)} \Big(\frac{1}{t}\int_0^t v(\omega(s))\,{\rm d}s\Big)\alpha(\omega(t))\,{\rm d}P^\A_x(\omega)\notag\\
&\qquad = \int_{B_t} \bigg(\frac{1}{t}\int_0^t v(\omega(s))\,{\rm d}s\bigg)\alpha(\omega(t))\,{\rm d}P^\A_x(\omega)\notag\\ &\qquad\qquad\qquad\qquad\qquad\qquad\qquad+ \int_{D_\A[0,t]\setminus B_t} \bigg(\frac{1}{t}\int_0^t v(\omega(s))\,{\rm d}s\bigg)\alpha(\omega(t))\,{\rm d}P^\A_x(\omega).\end{align}%
As Schwartz-Bruhat functions, both $v$ and $\alpha$ are respectively bounded in absolute value by constants $K_1$ and $K_2$. Use these bounds to estimate the second term of \eqref{DJ:Sum} by
\begin{align*}
\left|\int_{D_\A[0,\infty)\setminus B_t} \bigg(\frac{1}{t}\int_0^t v(\omega(s))\,{\rm d}s\bigg)\alpha(\omega(t))\,{\rm d}P^\A_x(\omega)\right| & \leq \int_{D_\A[0,\infty)\setminus B_t} \bigg(\frac{1}{t}\int_0^t K_1\,{\rm d}s\bigg)K_2\,{\rm d}P^\A_x(\omega)\\& =  \int_{D_\A[0,\infty)\setminus B_t} K_1K_2\,{\rm d}P_x(\omega)\to 0
\end{align*}
as $t$ tends to 0 from the right.  For any path $\omega$ in $B_t$, $v(\omega(s))$ and $\alpha(\omega(s))$ are respectively equal to $v(x)$ and $\alpha(x)$ as long as $s$ is in $[0, t]$, implying that 
\begin{align*}\int_{B_t} \bigg(\frac{1}{t}\int_0^t v(\omega(s))\,{\rm d}s\bigg)\alpha(\omega(t))\,{\rm d}P^\A_x(\omega) & = \int_{B_t} \bigg(\frac{1}{t}\int_0^t v(x)\,{\rm d}s\bigg)\alpha(\omega(t))\,{\rm d}P^\A_x(\omega)\\& = \int_{B_t} v(x)\alpha(x)\,{\rm d}P^\A_x(\omega) \to v(x)\alpha(x)
\end{align*}
as $t$ tends to 0 from the right.  Taking the sum of limits of both terms of \eqref{DJ:Sum} proves \eqref{potential-limit-estimate}.

Since $(\pi_t)_{t> 0}$ is self adjoint, its infinitesimal generator is a self adjoint operator.  Its infinitesimal generator restricted to $SB(\A)$ is $-H_\A$, and $-H_\A$ is essentially self adjoint on $SB(\A)$.  Therefore, the infinitesimal generator of $(\pi_t)_{t> 0}$ is $-H_\A$.

\end{proof}

\begin{theorem}\label{5:FKFormula}
The semigroup $(\pi_t)_{t> 0}$ is equal to the semigroup $\big(\e^{-tH_\A}\big)_{t> 0}$.  
\end{theorem}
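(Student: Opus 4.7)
The plan is to invoke the uniqueness of strongly continuous semigroups with a prescribed infinitesimal generator, which in this self-adjoint setting amounts to Stone's theorem applied to the imaginary shift (or equivalently the spectral-theorem construction of $\e^{-tH_\A}$).

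First, I would record that $H_\A$ is essentially self adjoint on $SB(\A)$: this follows from Section~\ref{four}, where $\Delta_\A$ is shown to be essentially self adjoint on $SB(\A)$ under the finiteness of $\sigma$, combined with the bounded real-valued multiplication operator $V$ (as already noted in the paragraph defining $H_\A^0$ and $H_\A$). Denote again by $H_\A$ the unique self adjoint extension. The spectral theorem then yields a strongly continuous semigroup $\big(\e^{-tH_\A}\big)_{t>0}$ of bounded self adjoint operators on $L^2(\A)$ whose infinitesimal generator is $-H_\A$, with $SB(\A)$ a core for this generator.

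Next, I would assemble Lemmas~\ref{5:FK:a}, \ref{5:FK:b}, and \ref{5:FK:c}: the family $(\pi_t)_{t>0}$ is a strongly continuous semigroup of bounded self adjoint operators on $L^2(\A)$, and its infinitesimal generator agrees with $-H_\A$ on $SB(\A)$. Because $SB(\A)$ is a core for the self adjoint extension of $-H_\A$, and because the infinitesimal generator of any strongly continuous semigroup is closed, the generator of $(\pi_t)_{t>0}$ must equal the closure of $-H_\A\big|_{SB(\A)}$, which is precisely the self adjoint operator $-H_\A$.

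Finally, uniqueness of the strongly continuous semigroup associated to a given densely defined generator (Hille--Yosida, or more directly the spectral theorem applied to $H_\A$) forces $\pi_t = \e^{-tH_\A}$ for every positive $t$. The only point that requires care, and which I view as the main (mild) obstacle, is the identification of the \emph{full} generator of $(\pi_t)_{t>0}$ with $-H_\A$ rather than only on the core $SB(\A)$; this is handled by the core argument above, together with the fact that self adjoint generators of strongly continuous semigroups of bounded self adjoint operators are uniquely determined by their restriction to any core.
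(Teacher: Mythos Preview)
Your proposal is correct and follows essentially the same route as the paper: assemble Lemmas~\ref{5:FK:a}--\ref{5:FK:c} and invoke uniqueness of a strongly continuous semigroup with a given generator. Two minor remarks: first, the core argument you flag as the ``main obstacle'' is already absorbed into Lemma~\ref{5:FK:c}, whose conclusion is that the full generator of $(\pi_t)$ is $-H_\A$, not merely its restriction to $SB(\A)$; second, the paper adds one ingredient you omit, namely that $v\ge 0$ together with Young's inequality makes $(\pi_t)$ a \emph{contraction} semigroup, which is the hypothesis under which the standard uniqueness statement (Hille--Yosida) is usually quoted. Your spectral-theorem route avoids needing contractivity explicitly, so this is a cosmetic rather than substantive difference.
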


Theorem~\ref{5:FKFormula} is the Feynman-Kac formula for the semigroup $\big(\e^{-tH_\A}\big)_{t> 0}$.  The fact that this semigroup is a semigroup of integral operators whose kernels are integrals over adelic Brownian bridges follows as a corollary of this theorem.

\begin{proof}
Young's convolution inequality, together with the fact that $v$ is non-negative, implies that the semigroup $(\pi_t)_{t> 0}$ is a contraction semigroup. Since strongly continuous contraction semigroups with equal infinitesimal generators are equal, Lemma~\ref{5:FK:a}, Lemma~\ref{5:FK:b}, and Lemma~\ref{5:FK:c} together imply the theorem.
\end{proof}

A \emph{simple adelic potential function} is a function $v$ on $\A$ that is a sum \[v = \sum_i \tau_i\tilde{v}_i\] where for each $i$, $v_i$ is a Schwartz-Bruhat function on $\mathds Q_{p_i}$ and \[\tilde{v}_i(x) = v_i(x_i),\] the functions $v_i$ are uniformly bounded in $i$, and $(\tau_i)$ is summable. As a shorthand, denote henceforth by $v_i$ the function $\tau_iv_i$.  An adelic potential, $V$, is \emph{simple} if it is the potential associated to a simple adelic potential function.  To compress notation, denote below by $D_i[0, \infty)$ the path space $D([0,\infty): \mathds Q_{p_i})$.

\begin{lemma}\label{Lem:prodpullsout}
If $v$ is a simple adelic potential function and $x$ and $y$ are in $\A$, then
\[\int_{D_\A[0,\infty)}\prod_{i\in\mathds N}\e^{-\int_0^tv_i(\omega_i(s))\,{\rm d}s}\,{\rm d}P_{t, x, y}^\A(\omega) = \prod_{i\in\mathds N}\int_{D_i[0,\infty)}\e^{-\int_0^tv_i(\omega_i(s))}\,{\rm d}P^{i}_{t, x_i, y_i}(\omega_i).\] 
\end{lemma}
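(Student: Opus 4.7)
The plan is to combine two ingredients: the factoring of the integrand $\prod_i \e^{-\int_0^t v_i(\omega_i(s))\,{\rm d}s}$ into pieces each depending only on a single $p$-adic coordinate of the path, and the identification from Theorem~\ref{6:thm:7} of $P^\A_{t,x,y}$ with the restriction to $D(I\colon\A)$ of the product measure $\otimes_i P^i_{t,x_i,y_i}$. Once these are in hand, the identity is essentially an application of Tonelli's theorem for an infinite product of probability spaces, and the work consists of verifying the hypotheses that license the interchange.

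First I would verify that the infinite product on the left is unambiguously defined. Using that $(\tau_i)$ is summable and $(\tilde v_i)$ is uniformly bounded, by $K$ say, one obtains the pointwise bound
\[0 \leq \int_0^t v_i(\omega_i(s))\,{\rm d}s \leq Kt|\tau_i|,\]
so $\sum_i \int_0^t v_i(\omega_i(s))\,{\rm d}s \leq Kt\sum_i|\tau_i|<\infty$ uniformly in the path $\omega$. Each factor $\e^{-\int_0^t v_i(\omega_i(s))\,{\rm d}s}$ therefore lies in $[\e^{-Kt|\tau_i|},1]$, and the partial products
\[F_N(\omega) = \prod_{i\leq N}\e^{-\int_0^t v_i(\omega_i(s))\,{\rm d}s}\]
decrease monotonically to a limit $F(\omega)\in [\e^{-Kt\sum_i|\tau_i|},1]$, with each $F_N$ Borel measurable as a function of finitely many coordinates.

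Next I would truncate at level $N$ and apply finite Fubini. Since Theorem~\ref{6:thm:7} identifies $P^\A_{t,x,y}$ with $\otimes_i P^i_{t,x_i,y_i}$ on $D(I\colon\A)$, and $F_N$ depends only on the first $N$ coordinates, a standard Fubini argument yields
\[\int_{D_\A[0,\infty)} F_N\,{\rm d}P^\A_{t,x,y} = \prod_{i\leq N}\int_{D_i[0,\infty)} \e^{-\int_0^t v_i(\omega_i(s))\,{\rm d}s}\,{\rm d}P^i_{t,x_i,y_i}(\omega_i),\]
because the coordinates beyond $N$ contribute factors of $P^i_{t,x_i,y_i}(D_i[0,\infty))=1$. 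Dominated convergence (with dominating function $1$) lets one send $N\to\infty$ on the left. On the right, the estimate $0\leq 1 - \int \e^{-\int_0^t v_i\,{\rm d}s}\,{\rm d}P^i_{t,x_i,y_i} \leq Kt|\tau_i|$ is summable by hypothesis, so the infinite product converges absolutely to the desired right-hand side.

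The main obstacle is giving a clean justification of infinite-dimensional Fubini on $D(I\colon\A)$, rather than on the ambient product $D(I)$. Theorem~\ref{6:thm:7} delivers equality of the two measures on the $\pi$-system of cylinder sets with restricted histories, which generates the Borel $\sigma$-algebra of $D(I\colon\A)$. A monotone-class (or Dynkin $\pi$-$\lambda$) argument propagates the agreement from indicators of cylinder sets to simple functions and finally to the bounded measurable $F_N$, after which the identity claimed by the lemma is recovered by passing $N\to\infty$ as above.
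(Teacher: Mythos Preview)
Your approach is essentially identical to the paper's: both truncate the product at level $N$, invoke Theorem~\ref{6:thm:7} together with Fubini on the finite product, and then pass to the limit using the summability of $(\tau_i)$; the paper packages the limit as an explicit $\varepsilon$--$N$ argument with tail errors $e_1(N),e_2(N)$, while you use dominated convergence, but the content is the same. One small caveat: only the total $v$ is assumed nonnegative, not the individual $v_i=\tau_i\tilde v_i$, so your claims that $0\le\int_0^t v_i(\omega_i(s))\,{\rm d}s$ and that $F_N$ decreases monotonically are not justified as stated; simply replace the dominating function $1$ by the constant $\e^{Kt\sum_i|\tau_i|}$ and drop the monotonicity remark, and the argument goes through unchanged.
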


\begin{proof}
Since $v$ is bounded on $\A$, for all $\varepsilon$ greater than 0 there is a natural number $N_1$ and a function $e_1$ on $\mathds N$ such that $N$ is larger than $N_1$ implies that \[\big|e_1(N)\big| < \varepsilon\] and
\begin{align}\label{sec5:N1est}
&\int_{D_\A[0,\infty)}\prod_{i\in\mathds N}\e^{-\int_0^tv_i(\omega_i(s))\,{\rm d}s}\,{\rm d}P_{t, x, y}^\A(\omega) \notag\\&\hspace{1.5in}= \int_{D_\A[0,\infty)} \prod_{i\leq N} \e^{-\int_0^tv_i(\omega_i(s))\,{\rm d}s}\cdot \prod_{i> N} \e^{-\int_0^tv_i(\omega_i(s))\,{\rm d}s}\,{\rm d}P_{t, x, y}^\A(\omega)\notag\\
&\hspace{1.5in}= \int_{D_\A[0,\infty)} \prod_{i\leq N} \e^{-\int_0^tv_i(\omega_i(s))\,{\rm d}s}\cdot \e^{-\int_0^t\sum_{i> N}v_i(\omega_i(s))\,{\rm d}s}\,{\rm d}P_{t, x, y}^\A(\omega)\notag\\
&\hspace{1.5in}= \int_{D_\A[0,\infty)} \prod_{i\leq N} \e^{-\int_0^tv_i(\omega_i(s))\,{\rm d}s} (1+ e_1(N))\,{\rm d}P_{t, x, y}^\A(\omega)\notag\\
&\hspace{1.5in}= \int_{D[0,\infty)} \prod_{i\leq N} \e^{-\int_0^tv_i(\omega_i(s))\,{\rm d}s} (1+ e_1(N))\,{\rm d}P_{t, x, y}(\omega)\notag\\
&\hspace{1.5in}= (1+ e_1(N))\prod_{i\leq N}\int_{D_i[0,\infty)} \e^{-\int_0^tv_i(\omega_i(s))\,{\rm d}s}\,{\rm d}P_{t, x_i, y_i}^i(\omega_i),
\end{align} where Theorem~\ref{6:thm:7} implies the penultimate equality and Fubini's theorem implies the ultimate equality.  There is, furthermore, a natural number $N_2$ and a function $e_2$ on $\mathds N$ so that $N$ is larger than $N_2$ implies that \[\big|e_2(N)\big| < \varepsilon\] and %
\begin{equation}\label{sec5:N2est}\prod_{i>N}\int_{D_i[0,\infty)} \e^{-\int_0^tv_i(\omega_i(s))\,{\rm d}s}\,{\rm d}P_{t, x_i, y_i}^i(\omega_i) = (1 + e_2(N)).\end{equation} 

Equalities \eqref{sec5:N2est} and \eqref{sec5:N1est} together imply that if $N$ is larger than the maximum of $N_1$ and $N_2$, then
\begin{align*}
&\left|\int_{D_\A[0,\infty)}\prod_{i\in\mathds N}\e^{-\int_0^tv_i(\omega_i(s))\,{\rm d}s}\,{\rm d}P_{t, x, y}^\A(\omega) - \prod_{i\in\mathds N}\int_{D_i[0,t]} \e^{-\int_0^tv_i(\omega_i(s))\,{\rm d}s}\,{\rm d}P_{t, x_i, y_i}^i(\omega_i)\right|\\
&\qquad = \bigg|(1+ e_1(N))\prod_{i\leq N}\int_{D_i[0,\infty)} \e^{-\int_0^tv_i(\omega_i(s))\,{\rm d}s}\,{\rm d}P_{t, x_i, y_i}^i(\omega_i) \\&\hspace{2in}- \prod_{i\leq N}\int_{D_i[0,\infty)} \e^{-\int_0^tv_i(\omega_i(s))\,{\rm d}s}\,{\rm d}P_{t, x_i, y_i}^i(\omega_i) - e_2(N)\bigg|\\
&\qquad = \bigg|e_1(N)\prod_{i\leq N}\int_{D_i[0,\infty)} \e^{-\int_0^tv_i(\omega_i(s))\,{\rm d}s}\,{\rm d}P_{t, x_i, y_i}^i(\omega_i) - e_2(N)\bigg|\\
&\qquad \leq |e_1(N)| + |e_2(N)| < 2\varepsilon.
\end{align*}
The arbitrary choice of $\varepsilon$ implies that \[\int_{D_\A[0,\infty)}\prod_{i\in\mathds N}\e^{-\int_0^tv_i(\omega_i(s))\,{\rm d}s}\,{\rm d}P_{t, x, y}^\A(\omega) = \prod_{i\in\mathds N}\int_{D_i[0,\infty)}\e^{-\int_0^tv_i(\omega_i(s))}\,{\rm d}P^{i}_{t, x_i, y_i}(\omega_i).\] 
\end{proof}

Suppose that $v_i$ is a positive, bounded, continuous function on ${\mathds Q}_{p_i}$ and that the potential $V_i$ acts on functions $\alpha$ in $SB({\mathds Q}_{p_i})$ by \[(V_i\alpha)(x_i) = v(x_i)\alpha(x_i).\] Define by $H_i$  the Schr\"{o}dinger operator \[H_i= \Delta_i + V_i.\] The results of \cite{DVW} specialize to the case of scalar potentials on $\mathds Q_{p_i}$ and imply that for any positive $t$ and $x_i$ in $\mathds Q_{p_i}$, if \[\big(\pi^i_t\alpha)(x_i) = \int_{D_i[0,\infty)} e^{-\int_0^tv_i(\omega(s))\,{\rm d}s} \alpha(\omega(t))\,{\rm d}P^i_{x_i}(\omega_i),\] then \[\big(\pi^i_t\big)_{t> 0} = \big(\e^{-tH_i}\big)_{t> 0}.\] Furthermore, 
\begin{align*} \big(\pi^i_t\alpha\big)(x_i) &= \int_{\mathds Q_{p_i}}\bigg\{\int_{D_i[0,\infty)} e^{-\int_0^tv_i(\omega(s))\,{\rm d}s}\,{\rm d}P^i_{t, x_i,y_i}(\omega_i)\bigg\} \rho^i(t,x_i-y_i)\alpha(y_i)\,{\rm d}y_i\\ &= \int_{\mathds Q_{p_i}} k_t^i(x_i, y_i) \alpha(y_i)\,{\rm d}y_i,\end{align*}where \begin{equation}\label{sec5:iker}k_t^i(x_i, y_i) = \bigg\{\int_{D_i[0,\infty)} e^{-\int_0^tv_i(\omega(s))\,{\rm d}s}\,{\rm d}P^i_{t, x_i,y_i}(\omega_i)\bigg\} \rho^i(t,x_i-y_i).\end{equation}

\begin{theorem}
If $V$ is a simple adelic potential, then $(\pi_t)$ is a semigroup of integral operators whose kernel, $k_t$, is a product of the kernels $k_t^i$ of the semigroups $(\pi_t^i)$.
\end{theorem}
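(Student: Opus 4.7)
The strategy is to start from the explicit kernel formula produced by Theorem~\ref{5:FKFormula}, namely
\[K_t(x,y) = \left\{\int_{D_\A[0,t]} \e^{-\int_0^t v(\omega(s))\,{\rm d}s}\,{\rm d}P^\A_{t,x,y}(\omega)\right\}\rho(t,x-y),\]
and then unpack each of its three factors into a product over the primes $p_i$. The main inputs are Lemma~\ref{Lem:prodpullsout} and the product representation of the adelic transition density established as part of Theorem~\ref{6:thm:pre7}.

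First, I would rewrite the exponential using the simple-potential structure. Because $v = \sum_i v_i$ with $(\tau_i)$ summable and the $v_i$ uniformly bounded, the partial sums $\sum_{i\le N} v_i(\omega_i(s))$ are dominated uniformly in $s$ and $\omega$ by $\sum_i \tau_i \|v_i\|_\infty$, so by dominated convergence
\[\int_0^t v(\omega(s))\,{\rm d}s = \sum_{i\in\mathds N}\int_0^t v_i(\omega_i(s))\,{\rm d}s,\]
and hence
\[\e^{-\int_0^t v(\omega(s))\,{\rm d}s} = \prod_{i\in\mathds N}\e^{-\int_0^t v_i(\omega_i(s))\,{\rm d}s}.\]
Inserting this into $K_t(x,y)$ and applying Lemma~\ref{Lem:prodpullsout} converts the Brownian-bridge integral into a product
\[\int_{D_\A[0,\infty)}\prod_{i\in\mathds N}\e^{-\int_0^t v_i(\omega_i(s))\,{\rm d}s}\,{\rm d}P^\A_{t,x,y}(\omega) = \prod_{i\in\mathds N}\int_{D_i[0,\infty)}\e^{-\int_0^t v_i(\omega_i(s))\,{\rm d}s}\,{\rm d}P^i_{t,x_i,y_i}(\omega_i).\]

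Second, I would factor the adelic heat kernel. Theorem~\ref{6:thm:pre7} shows $\rho(t,a) = \prod_i \rho^i(t,a_i)$, so in particular $\rho(t, x-y) = \prod_i \rho^i(t, x_i - y_i)$. Combining this factorization with the previous one gives
\[K_t(x,y) = \prod_{i\in\mathds N}\left\{\int_{D_i[0,\infty)}\e^{-\int_0^t v_i(\omega_i(s))\,{\rm d}s}\,{\rm d}P^i_{t,x_i,y_i}(\omega_i)\right\}\rho^i(t,x_i-y_i) = \prod_{i\in\mathds N} k_t^i(x_i,y_i),\]
by the definition \eqref{sec5:iker} of the $p_i$-adic Feynman-Kac kernel. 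This establishes that $k_t = K_t$ is the desired product of the $k_t^i$.

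The only genuinely delicate step is the interchange of the product over $i$ with the adelic path integral carried out in Lemma~\ref{Lem:prodpullsout}; the rest of the argument is bookkeeping, since Theorem~\ref{5:FKFormula} already supplies the integral-operator representation and Theorem~\ref{6:thm:pre7} supplies the product representation of $\rho$. Thus once the exponential is split into a product and Lemma~\ref{Lem:prodpullsout} is invoked, the theorem follows immediately.
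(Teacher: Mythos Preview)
Your proposal is correct and follows essentially the same approach as the paper: split the exponential into a product using the simple-potential structure, invoke Lemma~\ref{Lem:prodpullsout} to factor the Brownian-bridge integral, and use the product form of $\rho$ from Theorem~\ref{6:thm:pre7}. The only cosmetic difference is that the paper carries a test function $\alpha$ through the computation of $(\pi_t\alpha)(x)$, whereas you work directly at the level of the kernel $K_t(x,y)$; the substance is identical.
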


\begin{proof}
Denote for each positive $t$ respectively by $k_t^i$ on $\mathds Q_{p_i}\times \mathds Q_{p_i}$ and $k_t$ on $\A\times \A$ the functions given by \eqref{sec5:iker} and by \[k_t(x, y) = \bigg\{\int_{D_\A[0,\infty)}\e^{-\int_0^tv(\omega(s))\,{\rm d}s}\,{\rm d}P^\A_{t, x, y}(\omega)\bigg\}\rho(t, x-y).\]%
Since $v$ is a simple adelic potential function, there are Schwartz-Bruhat functions $(v_i)$ so that for each $x$ in $\A$, $v(x)$ is the sum \[v(x) = \sum_i v_i(x_i).\] For any Schwartz-Bruhat function $\alpha$, represent $v$ by the given sum to obtain the equalities 
\begin{align}\
\big(\pi_t^\A \alpha\big)(x) & = \int_{D_\A[0,\infty)}\bigg\{\prod_{i\in\mathds N}\e^{-\int_0^tv_i(\omega_i(s))\,{\rm d}s}\alpha(\omega(t))\bigg\}{\rm d}P_x^\A(\omega)\nonumber\\
& = \int_\A\bigg\{\int_{D_\A[0,\infty)}\prod_{i\in\mathds N}\e^{-\int_0^tv_i(\omega_i(s))\,{\rm d}s}\alpha(\omega(t))\,{\rm d}P_{t, x, y}^\A(\omega)\bigg\}\rho(t,x-y)\,{\rm d}y\nonumber\\
& = \int_\A\bigg\{\int_{D_\A[0,\infty)}\prod_{i\in\mathds N}\e^{-\int_0^tv_i(\omega_i(s))\,{\rm d}s}\,{\rm d}P_{t, x, y}^\A(\omega)\bigg\}\alpha(y)\rho(t,x-y)\,{\rm d}y\nonumber\\
& = \int_\A\prod_{i\in\mathds N}\bigg\{\int_{D_i[0,\infty)}\e^{-\int_0^tv_i(\omega_i(s))}\,{\rm d}P^{i}_{t, x_i, y_i}(\omega_i)\bigg\}\alpha(y)\rho(t,x-y)\,{\rm d}y\label{five:eqn:Prodaa}\\
& = \int_\A\prod_{i\in\mathds N}k_t^i(x_i, y_i)\alpha(y)\,{\rm d}y,\nonumber%\\
\end{align} 
where Lemma~\ref{Lem:prodpullsout} implies \eqref{five:eqn:Prodaa}.

\end{proof}

\end{document}